\newcommand\ddfrac[2]{\frac{\displaystyle #1}{\displaystyle #2}}
\newcommand{\mathsym}[1]{{}}
\newcommand{\unicode}[1]{{}}
\DeclarePairedDelimiter{\ceil}{\lceil}{\rceil}
\DeclarePairedDelimiter\floor{\lfloor}{\rfloor}
\newcommand\reallywidehat[1]{%
\savestack{\tmpbox}{\stretchto{%
  \scaleto{%
    \scalerel*[\widthof{\ensuremath{#1}}]{\kern.1pt\mathchar"0362\kern.1pt}%
    {\rule{0ex}{\textheight}}
  }{\textheight}%
}{2.4ex}}%
\stackon[-6.9pt]{#1}{\tmpbox}%
}
\newtheorem{theorem}{Theorem}[section]
\theoremstyle{plain}
\newtheorem{corollary}[theorem]{Corollary}
\newtheorem{definition}[theorem]{Definition}
\newtheorem{example}[theorem]{Example}
\newtheorem{lemma}[theorem]{Lemma}
\newtheorem{proposition}[theorem]{Proposition}
\numberwithin{equation}{section}
\begin{document}
\title[The Atiyah-Bott-Lefschetz Formula Applied to the Based Loops on SU(2)]
{The Atiyah-Bott Lefschetz Formula Applied to the Based Loops on SU(2)}
\author{Jack Ding}
\subjclass{Primary 22E65, 53D50; Secondary} %
\keywords{}

\maketitle

\begin{abstract}
The Atiyah-Bott-Lefschetz Formula is a well-known formula for computing the equivariant index of an elliptic operator on a compact smooth manifold. We provide an analogue of this formula for the based loop group $\Omega SU(2)$ with respect to the natural $(T \times S^1)$-action. From this result we also derive an effective formula for computing characters of certain Demazure modules.
\end{abstract} 

\selectlanguage{french} 
\begin{abstract}
La formule d'Atiyah-Bott-Lefschetz est une formule bien
connue pour l'indice \'equivariante d'un op\'erateur elliptique  sur 
une vari\'et\'e lisse compacte. Nous donnons une analogue de cette
formule pour le groupe de lacets bas\'es $\Omega SU(2)$ 
par rapport \`a l'action naturelle de $T \times S^1$. Avec ce
r\'esultat nous d\'emontrons aussi une formule effective pour
les caract\`eres de certaines modules de Demazure.
\end{abstract}

\selectlanguage{english} 
\section{Introduction}
Let $G$ be a compact, simply connected Lie group. Denote by $\Omega G$ the set of all smooth loops based at $e \in G$, i.e. $\Omega G = \{f \in C^{\infty}(S^1, G) \hspace{0.1cm}| \hspace{0.1cm}f(1) = e\}$. We then consider the subgroup of polynomial loops $\Omega_{poly} G = \{f \in \Omega G \hspace{0.1cm}| \hspace{0.1cm} f(t) = \displaystyle\sum\limits_{k = -N}^N a_k t^k , a_k \in M_{n \times n}(\mathbb{C})\}$, the based loops with finite Fourier expansion. Let $G_{\mathbb{C}}$ be the complexification of $G$. It is known that $\Omega_{poly} G$ is homotopy equivalent to the affine Grassmannian of $G_{\mathbb{C}}$ defined by $Gr = G_{\mathbb{C}}((t))/G_{\mathbb{C}}[[t]]$, where $G_{\mathbb{C}}((t))$ and $G_{\mathbb{C}}[[t]]$ denote Laurent series with values in $G_{\mathbb{C}}$ and power series with values in $G_{\mathbb{C}}$ respectively.

$\Omega G$ is an infinite-dimensional symplectic manifold equipped with the 2-form 
\begin{equation}
\omega(X(t), Y(t)) = \frac{1}{2\pi} \displaystyle\int\limits_0^{2\pi} \langle X(t), Y^{'}(t) \rangle \hspace{0.1cm} dt
\end{equation}
where $X(t), Y(t) \in T_e\Omega G \cong L\mathfrak{g}/\mathfrak{g} = \Omega \mathfrak{g}$, the space of loops in $\mathfrak{g}$ based at 0. For elements in the tangent space around an arbitrary point $T_p\Omega G$ the symplectic form is computed by pulling back the tangent vectors to the identity (via left or right multiplication), i.e. to elements of $\Omega \mathfrak{g}$ (see \cite{ps} chapter 8 for more details). \\
There are two natural Lie groups acting on $\Omega G$: First the maximal torus $T \subset G$ acts by pointwise conjugation $k \cdot \theta(t) = k\theta(t) k^{-1}$ for $k \in T$, second the circle group $S^1$ acts by rotating the loops. However, since the loops must be based, we instead have a "based rotation" given by $\theta \cdot \gamma(t) = \gamma(t + \theta) \gamma(\theta)^{-1}$ for $\theta \in S^1$. These two actions commute with each other. (See \cite{ap}, \cite{ps} for more details)\\
This $(T \times S^1)$-action is in fact Hamiltonian with respect to $\omega$, and the corresponding moment map is given by 
$\mu = (E, \rho): \Omega_G \rightarrow Lie(T \times S^1)^*$ with
\begin{align*}
E(\gamma) = \frac{1}{4\pi} \displaystyle\int\limits_0^{2\pi} ||\gamma(t)^{-1} \gamma^{'}(t)||^2 dt \\
\rho(\gamma) = \frac{1}{2\pi} \displaystyle\int\limits_0^{2\pi} pr_{Lie(T)}(\gamma(t)^{-1} \gamma^{'}(t)) dt
\end{align*}
The moment map and symplectic form restrict to the polynomial loops, and for the remainder of the paper we will refer to $\Omega_{poly} G$ as $\Omega G$ for simplicity.\\

In the 1960s, it was conjectured that the $G$-equivariant index of an elliptic operator can be computed purely in terms of data around the $G$-fixed points. Such a formula was proven by Atiyah and Bott \cite{ab} and is given in Section \ref{ab-formula}.
As we have mentioned previously, $\Omega G$ is homotopy equivalent to $Gr$, so we may also consider $\Omega G$ as an infinite dimensional Grassmannian. As such, it exhibits properties similar to those of flag manifolds and Grassmannians. One such property is the Bruhat decomposition, which says that $Gr$ is an (infinite) disjoint union of affine spaces called \textbf{Bruhat cells}. This property implies that $Gr$ has a filtration by finite-dimensional (possibly singular) spaces formed by finite unions of Bruhat cells, these spaces are known as \textbf{Schubert varieties}. \\
The Atiyah-Bott formula however, applies only to smooth manifolds. To this end, we use a well-known desingularization of Schubert varieties called \textbf{Bott-Samelson manifolds}. It was shown by Kumar \cite{kum} that:
\begin{enumerate}
\item The $(T \times S^1)$-equivariant indices of the prequantum line bundles on the Schubert varieties and Bott-Samelson manifolds are isomorphic
\item The equivariant index of the prequantum line bundle on $\Omega G \cong Gr$ is the inverse limit of the equivariant indices of the Schubert varieties
\end{enumerate}
Armed with these two facts, we may then compute positive-energy representations of $\Omega G$ in the case of $G = SU(2)$ as a limit of equivariant indices of Bott-Samelson manifolds, on which we may employ the Atiyah-Bott fixed point formula. Kumar proved that the character formula for the Bott-Samelson was the same as the character formula for the Schubert varieties. So we prove a character formula for the Bott-Samelson manifolds, and take a limit as the dimension of the Bott-Samelson manifolds goes to infinity. \\
As a corollary, we obtain an \textbf{effective} character formula for Demazure modules of the affine Lie algebra $\hat{\mathfrak{sl}}_2$ (i.e. a formula containing explicit weights). The character of Demazure modules was previously only known as a composition of Demazure operators. We also obtain an affine analogue of the Kostant multiplicity formula for semisimple Lie algebras.

\subsection{The Outline of the Paper}
In section 2 we introduce some basic facts about affine Weyl groups. In section 3 we review a topological construction of Bott-Samelson manifolds and certain affine coordinates on them. In sections 4-5 we review prequantization of (pre-)symplectic manifolds and discuss the relations to the Atiyah-Bott formula.
In section 7 we review the Kostant-Kumar nil-Hecke algebra and formulate our main theorem in two versions: Theorem \ref{v1} and Theorem \ref{v2}. These theorems describe analogues of the Atiyah-Bott formula for $\Omega SU(2)$. In section 8 we apply our theorems (and their proofs) to derive characters of Demazure modules for $\hat{\mathfrak{sl}_2}$. Finally we read off our character formula to derive a Kostant multiplicity formula for the affine Lie algebra.

\section{The Affine Weyl Group} \label{afweyl}
In this section we follow the notes of Magyar \cite{mag}.
First we recall that the Weyl group associated to type~$A_{N-1}$ is isomorphic to the symmetric group $S_N$. Given the finite Weyl group $W$ associated to type~$A_{N-1}$, the affine Weyl group $\overline{W}$ is the semidirect prodct $W \ltimes Q$, where $Q$ denotes the coroot lattice of $SU(N)$. Another way of expressing this is that each element in $\overline{W}$ is given by the product of a reflection $r_{\alpha}$ and a translation~$t^{\beta}$, where $\alpha$ is a root and $\beta \in Q$. \\

The generators include the standard generators for the finite Weyl group: $s_i = r_{\alpha_i}$ for $i = 1,\cdots, N - 1$. There is an extra generator in the affine Weyl group given by $s_0 = r_{\theta}t^{-\theta^{\vee}}$ where $\theta$ is the highest root. For these generators, the multiplication is given by 
$$(r_{\alpha_{i_1}}t^{\beta_{i_1}})(r_{\alpha_{i_2}}t^{\beta_{i_2}}) = r_{\alpha_{i_1}}r_{\alpha_{i_2}} t^{\beta_{i_1} + r_{\alpha_{i_1}}(\beta_{i_2})}.$$
        
\begin{example}
The group $SU(2)$ has a coroot lattice isomorphic to $\mathbb{Z}$ and its Weyl group is the group $\mathbb{Z}_2$ with generator $r$, so the Weyl group for affine $\mathfrak{sl}_2$ is the semidirect product $\mathbb{Z}_2 \ltimes \mathbb{Z}$. 
\end{example}

\section{Bott-Samelson Manifolds}
First we state the important Bruhat decomposition. Let $\mathcal{B}$ denote the Iwahori subgroup $\{f \in G_{\mathbb{C}}[[t]] \hspace{0.1cm}| \hspace{0.1cm} f(0) \in B\}$ where $B$ is a the standard Borel subgroup of $G$. For $G = SU(N)$, $B$ is given by the group of upper triangular matrices. For general $G$ we can embed $G$ into $SU(N)$ for some $n$ and then take upper triangular matrices intersected with the image of $G$. Let $\ell = rank \hspace{0.1cm}G + 1$, then we have the following definition

\begin{definition}[Kumar, \cite{kum} Definition 6.1.18, p. 186]
For any subset $Y \subset \{1, \cdots, \ell\}$ define the \textbf{standard parabolic subgroup} $\mathcal{P}_Y$ as $\mathcal{P}_Y = \mathcal{B} W_Y \mathcal{B}$ where $W_Y$ is the subgroup of the affine Weyl group generated by~$\{s_i\}_{i \in Y}$. 
\end{definition}

We consider the case where $Y = \{1, \cdots, \ell\}$ and we denote the corresponding subgroup $\mathcal{P}_Y$ simply by $\mathcal{P}$. Of course in this case, the group $W_Y$ is simply the finite Weyl group $W$.

\begin{theorem}[Bruhat Decomposition]
$Gr = \sqcup_{w \in \overline{W}} \mathcal{B}w\mathcal{P}/\mathcal{P}$, each component $\mathcal{B}w\mathcal{P}/\mathcal{P}$ is isomorphic to an affine space of dimension $\ell(w)$.
\end{theorem}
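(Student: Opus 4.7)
The plan is to invoke the Tits system (BN-pair) structure on the affine Kac--Moody group $G_{\mathbb{C}}((t))$, as developed systematically in Kumar's book \cite{kum}. First I would verify that $(\mathcal{B}, N(T_{\mathbb{C}}))$ forms a BN-pair for $G_{\mathbb{C}}((t))$, where $N(T_{\mathbb{C}})$ is the normalizer in the loop group of the complex maximal torus $T_{\mathbb{C}}$. The associated Weyl group $N(T_{\mathbb{C}})/(T_{\mathbb{C}} \cap N(T_{\mathbb{C}}))$ must be identified with the affine Weyl group $\overline{W} = W \ltimes Q$ from Section~\ref{afweyl}: the finite Weyl group $W$ arises from constant normalizing loops, while the coroot lattice $Q$ arises from the monomial cocharacters $t^{\beta}$. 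The generators $s_0, s_1, \dots$ then satisfy the affine Coxeter relations, with $\mathcal{B}$ playing the role of the minimal parabolic.

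Second, by the general theory of Tits systems, this immediately yields the Iwahori-Bruhat decomposition $G_{\mathbb{C}}((t)) = \bigsqcup_{w \in \overline{W}} \mathcal{B}\, \dot{w}\, \mathcal{B}$, where $\dot{w}$ denotes any lift of $w$ to $N(T_{\mathbb{C}})$. Since $\mathcal{P} = \mathcal{B}W\mathcal{B}$, the product $\mathcal{B}\dot{w}\mathcal{P}$ depends only on the coset $wW \in \overline{W}/W$, so choosing minimal-length coset representatives produces the disjoint decomposition in the statement, with $\ell(w)$ interpreted as the affine Weyl length of the chosen representative. To exhibit each cell as an affine space, I would fix a reduced expression $w = s_{i_1}\cdots s_{i_k}$ with $k = \ell(w)$ and use the commutation relations $\dot{s}_i U_{\alpha} \dot{s}_i^{-1} = U_{s_i(\alpha)}$ among affine root subgroups. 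Iterating these relations, the multiplication map identifies $\mathcal{B}\dot{w}\mathcal{P}/\mathcal{P}$ with a product $\prod_{\alpha} U_{\alpha} \cong \mathbb{C}^{\ell(w)}$, where $\alpha$ runs over the positive affine roots sent to negative by $w^{-1}$ that are not absorbed by $\mathcal{P}$; combinatorially there are exactly $\ell(w)$ of these, giving the required dimension count.

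The main obstacle is verifying the BN-pair axioms rigorously in the infinite-dimensional setting and handling the Ind-scheme nature of $G_{\mathbb{C}}((t))$ and $Gr$: all of the root-subgroup manipulations above must be carried out cellwise, using that each finite union of Bruhat cells is a finite-dimensional quasi-projective variety (a Schubert variety) on which the usual finite-type arguments apply. The other delicate point is the combinatorics of which affine roots $\alpha$ contribute to a given cell, since a careless bookkeeping conflates root directions absorbed into the parabolic stabilizer with genuine cell coordinates and gives the wrong dimension.
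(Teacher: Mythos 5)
The paper offers no proof of this theorem; it is quoted as a standard fact about affine Grassmannians, with the relevant machinery living in Kumar's book \cite{kum}. Your Tits-system outline is exactly the standard argument found there (the BN-pair axioms for the Iwahori subgroup $\mathcal{B}$, the resulting Iwahori--Bruhat decomposition, passage to the parabolic quotient via minimal-length coset representatives, and the root-subgroup parametrization of each cell), so there is nothing to contrast with: your route \emph{is} the paper's implicit route. Two small remarks. First, your restriction of the index set to minimal-length representatives of $\overline{W}/W$ is actually more careful than the theorem as stated, which loosely writes the union over all of $\overline{W}$ even though distinct $w$ in the same coset give the same cell; your reading is the correct one. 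Second, your identification of the BN-pair Weyl group with $W \ltimes Q$ uses that the cocharacter lattice of $T$ coincides with the coroot lattice, which holds precisely because $G$ is assumed simply connected --- worth flagging explicitly, since for non-simply-connected $G$ one gets the extended affine Weyl group instead. As a proof this remains an outline (the BN-pair axioms themselves are the real work), but the plan is sound and the dimension bookkeeping for the parabolic quotient is handled correctly.
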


For any element $x$ in a poset $S$, one may consider its lower order ideal defined by $I_x = \{y \in S \hspace{0.1cm} | \hspace{0.1cm} y \le x\}$. The affine Weyl group is a poset with order given by the Bruhat order. Therefore for any Weyl group element~$w \in \overline{W}$, we can take its lower order ideal $I_w := \{v \in \overline{W} \hspace{0.1cm}| \hspace{0.1cm} v \le w\}$. Then the disjoint union $\sqcup_{v \in I_w} \mathcal{B}v\mathcal{P}/\mathcal{P}$ is known as the Schubert variety associated to $w$. It is denoted by $X_w$. \\
In general $X_w$ may have singularities. In fact in the case of the finite-dimensional flag manifold $G_{\mathbb{C}}/B$, $X_w$ is smooth if and only if $w$, seen as an element of $S_n$, avoids $(1324)$ and $(2143)$ (see \cite{ulfarsson} for example). Therefore in order to use Atiyah and Bott's formula, we must use the desingularization given by Bott-Samelson manifolds. \\

In this paper we will use Magyar's topological description of Bott-Samelson manifolds (as opposed to the description using lattices in $Gr$). This approach allows us to use well-known coordinates on the Bott-Samelson manifolds to compute fixed point data. \\

A reduced word is one without redundant pairs (i.e. expressions like $xx^{-1}$). For each reduced word $w$, denote by $M_w$ the Bott-Samelson associated to $w$. It is a desingularization of~$X_w$.

\subsection{Magyar's Topological Description}
For this paper it suffices to give a description in type A. Let $\alpha = e_i - e_j$ be a root and $k$ be an integer. 
Define 
$K_{(e_i - e_j, k)}$ as the group generated by the maximal torus $T \in K$ and matrices with a copy of $SU(2)$ in the (ij)-block, in the form 
\[
\begin{pmatrix}
	a & -\overline{b}t^k \\
	bt^{-k} & \overline{a}
	\end{pmatrix}
	\]	
In type $A$, the root $e_i - e_{i+1}$ corresponds to the simple root $\alpha_i$ for $i = 1, \cdots, n-1$. \\
Let $\alpha_0 = (-\displaystyle\sum\limits_{i}^{n-1} \alpha_i, 1)$ and $\alpha_i = (\alpha_i, 0)$ for $i \ge 1$ denote the affine simple root and the finite simple roots respectively.
We now denote by $K_{(i)}$ the group $K_{\alpha_i}$. These groups $K_{\alpha_i}$ are subgroups of the free loop group $LK$. 

\begin{definition}
Let $w = s_{i_1} s_{i_2} \cdots s_{i_n}$ be a reduced word. The Bott-Samelson manifold associated to $w$ is given by the group $M_w := \left(K_{(i_1)} \times K_{(i_2)} \times \cdots \times K_{(i_n)} \right)/T^n$ where 
$K_{(i_1)} \times K_{(i_2)} \times \cdots \times K_{(i_n)}$ has the following action of $T^n$:
$$ (x_1, \cdots, x_n) \cdot (t_1, \cdots, t_n) = (x_1t_1, t_1^{-1}x_2t_2, \cdots, t_{n-1}^{-1}x_nt_n)$$
\end{definition}

We also give a topological description for the affine Schubert varieties living in $\Omega G$:
\begin{definition}
Let $w = s_{i_1} s_{i_2} \cdots s_{i_n}$ be a reduced word. Consider the group $K_w := K_{(i_1)} \cdot K_{(i_2)} \cdot \hdots \cdot K_{(i_n)}$ which lives in $LG$. Then the Schubert variety is defined to be the image of $K_w$ under the basepoint map $b: LG \rightarrow \Omega G$ given by $f(t) \mapsto f(t) \cdot f(1)^{-1}$. 
\end{definition}

\subsection{Coordinates on $M_w$}
For a Bott-Samelson manifold $M_w$ there is a birational multiplication map $\eta: M_w \rightarrow X_w$ to the corresponding Schubert variety. This map is given by 
$$\eta(x_1(t), \cdots, x_n(t)) =  x_1(t)\cdot \hdots \cdot x_n(t) \left(x_1(1) \cdot \hdots \cdot x_n(1) \right)^{-1} $$

The Schubert variety embeds into the based loop group and it inherits the $(T \times S^1)$-action, where $T$ acts by conjugation and $S^1$ acts by rotation. \\
We want an action on $M_w$ that is equivariant with respect to $\eta$. The action that does the job is 
\begin{equation}
t \cdot (k_1, \cdots, k_n) = (tk_1, k_2, \cdots, k_{n-1}, k_n)
\end{equation}
\begin{equation}
\theta \cdot (k_1, \cdots, k_n) = (\theta \cdot k_1, \theta \cdot k_2, \cdots, \theta \cdot k_{n-1}, \theta \cdot k_n)
\end{equation}
We observe that $M_w$ is defined by the equivalence relation
\begin{equation}
(k_1, \cdots, k_n) = (k_1t_1, t_1^{-1}k_2t_2, \cdots, t_{n-1}^{-1}k_n t_n)
\end{equation}
Hence the fixed points are $(k_1, \cdots, k_n)$ such that $k_j = \dot{s}_{i_j}$ or $1$ .
We define $\dot{w}$ as follows:
If $w = s_i$ for some $i \in \{1, \cdots, n-1\}$, then let $\dot{w}$ denote a lift of an element of the Weyl group $N_G(T)/T$ to $N_G(T)$. An explicit formula for a canonical lift of $s_{\alpha}$ is given by
$$ \exp(F_\alpha) \exp(-E_\alpha) \exp(F_\alpha)$$
where $E_\alpha, F_\alpha, H_\alpha$ is the $\mathfrak{sl}_2$-triple associated to the root $\alpha$. \\
Therefore we have a total of $2^n$ isolated fixed points.\\

Let $G$ be a compact Lie group, for any simple root $\alpha \in \mathfrak{g}^*$ there is an $SU(2)$-subgroup associated to $\alpha$. Let $\Psi_\alpha$ denote the embedding of that $SU(2)$-subgroup into $G$. Then we have the following description of the affine coordinates on the Bott-Samelson manifold (see \cite{lu}):

\begin{proposition} 
	There are affine charts around each fixed point $k = (k_1, \cdots, k_n) \in M_w$ given by 
	\begin{equation}
	\Phi_{(k_1, \cdots, k_n)}: (z_1, \cdots, z_n) \mapsto (\Psi_{\alpha_{i_1}}(m_{z_1}) \cdot k_1, \cdots, \Psi_{\alpha_{i_n}}(m_{z_n}) \cdot k_n)
	\end{equation}
	where 
	\[m_{z_i} := \frac{1}{\sqrt{1 + |z_i|^2}}
	\begin{pmatrix}
	1 & -\bar{z_i} \\
	z_i & 1
	\end{pmatrix}
	\]	
	if $k_i = e$ and 
		\[m_{z_i} := \frac{i}{\sqrt{1 + |z_i|^2}}
	\begin{pmatrix}
	z_i & 1 \\
	1 & -\bar{z_i}
	\end{pmatrix}
	\]	
	if $k_i = \dot{s_i}$.
\end{proposition}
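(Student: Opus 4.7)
The plan is to induct on the word length $n$, exploiting the structure of $M_w$ as an iterated $\mathbb{CP}^1$-bundle. The projection $\pi : M_w \to M_{w'}$ retaining the first $n-1$ coordinates, where $w' = s_{i_1}\cdots s_{i_{n-1}}$, realizes $M_w$ as a bundle with typical fiber $K_{(i_n)}/T \cong SU(2)/T \cong \mathbb{CP}^1$. A fixed point $(k_1,\ldots,k_n) \in M_w$ projects to a fixed point of $M_{w'}$, and the last coordinate $k_n \in \{e,\dot{s}_{i_n}\}$ selects one of the two $T$-fixed points in the fiber.

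For the base case $n=1$, I would verify by direct matrix computation that the embedding $\Psi_{\alpha_{i_1}} : SU(2)/T \hookrightarrow K_{(i_1)}/T \cong \mathbb{CP}^1$ composed with the first-column map sends $m_{z_1} \cdot k_1$ to the standard $\mathbb{CP}^1$ coordinate around $[k_1 T]$, for each of the two choices of $k_1$. For the inductive step, assume the chart claim for $M_{w'}$ around $(k_1,\ldots,k_{n-1})$ via $(z_1, \ldots, z_{n-1})$. I would trivialize $\pi$ over this chart using the constant section $(x_1,\ldots,x_{n-1}) \mapsto (x_1,\ldots,x_{n-1}, k_n)$, and layer on the base-case chart $z_n \mapsto \Psi_{\alpha_{i_n}}(m_{z_n}) \cdot k_n$ in the fiber. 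Composing these diffeomorphisms recovers the formula for $\Phi_{(k_1,\ldots,k_n)}$.

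The principal obstacle is to verify that the prescription descends unambiguously through the $T^n$-equivalence $(x_1,\ldots,x_n) \sim (x_1 t_1, t_1^{-1} x_2 t_2, \ldots, t_{n-1}^{-1} x_n t_n)$, and that the image of $\Phi$ is transverse to the $T^n$-orbits so that the quotient map restricts to an embedding. I would address this by computing the differential $d\Phi|_0$ and showing that the vectors $\partial_{z_j}\Phi|_{z=0}$ descend to $n$ linearly independent tangent vectors at the fixed point: each $\partial_{z_j}\Phi$ contributes a direction in the root-space complement to $\mathrm{Lie}(T)$ inside $\mathrm{Lie}(K_{(i_j)})$, whereas the infinitesimal $T^n$-equivalence moves one only along $\mathrm{Lie}(T)$-components. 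Since $\dim_{\mathbb{C}} M_w = n$, the inverse function theorem then yields that $\Phi_{(k_1,\ldots,k_n)}$ is a holomorphic chart in a neighborhood of $0 \in \mathbb{C}^n$.
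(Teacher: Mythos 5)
The paper offers no proof of this proposition at all --- it is stated with a bare reference to Lu \cite{lu} --- so a self-contained argument is worth having, and your overall strategy (induction on the length of the word via the iterated $\mathbb{CP}^1$-bundle structure $\pi\colon M_w \to M_{w'}$ with fibre $K_{(i_n)}/T \cong SU(2)/T$) is the standard and correct route; the base case is indeed a direct matrix check that $z\mapsto[\Psi_{\alpha_{i_1}}(m_{z})k_1]$ parametrizes $\mathbb{CP}^1$ minus the opposite fixed point. Two points need repair. First, in the inductive step the trivialization cannot come from ``the constant section $(x_1,\dots,x_{n-1},k_n)$'' alone: a section of an associated fibre bundle does not trivialize it. What does the work is that the chart on $M_{w'}$ comes with an explicit lift $(z_1,\dots,z_{n-1})\mapsto(\Psi_{\alpha_{i_1}}(m_{z_1})k_1,\dots,\Psi_{\alpha_{i_{n-1}}}(m_{z_{n-1}})k_{n-1})$, i.e.\ a section of the principal $T^{n-1}$-bundle $K_{(i_1)}\times\cdots\times K_{(i_{n-1})}\to M_{w'}$ over the image of that chart; this is what trivializes the associated $K_{(i_n)}/T$-bundle there, and composing with the $n=1$ chart in the fibre yields $\Phi_{(k_1,\dots,k_n)}$ as a diffeomorphism onto an open set. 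Once this is in place your ``principal obstacle'' paragraph becomes unnecessary and is in fact weaker than what you need: well-definedness is automatic (the map is a composite through the quotient), the $T^n$-action on the product is free so $M_w$ is an honest manifold, and the induction gives a \emph{global} chart on all of $\mathbb{C}^n$, whereas the inverse function theorem only gives a local one near $0$, which is not what the proposition asserts. Second, the final claim that $\Phi$ is a \emph{holomorphic} chart is wrong: $m_{z}$ depends on $\bar z$ and on $|z|$ through the normalizing factor, so these are smooth real coordinates only --- the paper itself remarks immediately after the proposition that the coordinates are not holomorphic and that $M_w$ is to be treated as a real $2n$-dimensional manifold.
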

Even though these coordinates are not holomorphic, we can still treat $M_w$ as a real manifold of dimension $2n$ and use the real coordinates $x_i, y_i$ for each $z_i$. \\

Each Bott Samelson manifold $M_w$ has $2^{\ell(w)}$ fixed points, indexed by what are called \textbf{galleries}. We now give the following definition due to H\"arterich \cite{harterich}: 

\begin{definition}
Let $w = s_{i_1} \cdots s_{i_n}$ be a reduced word. A \textbf{gallery} associated to $w$ is an element in the set $\{0, 1\}^n$. We denote a gallery $\gamma$ by an $n$-tuple $(\gamma_1, \cdots, \gamma_n)$. We define the realization of a gallery $\gamma$ by the word $s_{i_1}^{\gamma_1} \cdots s_{i_n}^{\gamma_n}$; it is denoted by $u(\gamma)$.
\end{definition}

\begin{example}
Let $w = s_1s_2s_1$, then there are $2^3 = 8$ possible galleries associated to $w$. If we take the gallery $\gamma = (0, 1, 1)$, then its realization is $u(\gamma) = s_1^0 s_2^1 s_1^1 = s_2s_1$. Note that two galleries may have the same realization, e.g. $u( (0, 0, 0))$ and $u((1, 0, 1))$ are both equal to the identity word $e$.
\end{example}

\begin{proposition}
Let $w$ be a reduced word. The fixed points of $M_w$ under the $(T \times S^1)$-action are in bijection with the set of galleries associated to $w$. In particular $M_w$ has $2^{\ell(w)}$ fixed points. 
\end{proposition}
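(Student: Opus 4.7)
The plan is to analyze the fixed-point condition directly using the quotient description $M_w = (K_{(i_1)} \times \cdots \times K_{(i_n)})/T^n$. A class $[k_1, \ldots, k_n]$ is $T$-fixed iff for every $t \in T$ there exist $t_1, \ldots, t_n \in T$ (depending on $t$) with
\[
tk_1 = k_1 t_1, \qquad k_j = t_{j-1}^{-1} k_j t_j \text{ for } j \ge 2,
\]
and similarly for the $S^1$-rotation. I will first solve the $T$-system, then verify that its solutions are automatically $S^1$-fixed, and finally match solutions to galleries.

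For the $T$-system, the first equation forces $t_1 = k_1^{-1} t k_1$; for this to lie in $T$ for every $t \in T$ we need $k_1 \in N_{K_{(i_1)}}(T)$. Iterating the relations, $t_j = k_j^{-1} t_{j-1} k_j$, so inductively each $k_j$ must normalize $T$ inside $K_{(i_j)}$. Since $K_{(i_j)}$ is generated by $T$ together with a rank-one $SU(2)$-subgroup attached to the affine simple root $\alpha_{i_j}$, we have $N_{K_{(i_j)}}(T) = T \sqcup T\dot{s}_{i_j}$. Working modulo the right $T$-factor of the $T^n$-equivalence, each $k_j$ is thus equivalent to either $1$ or $\dot{s}_{i_j}$, giving exactly $2^n$ candidate classes indexed by $\gamma = (\gamma_1, \ldots, \gamma_n) \in \{0,1\}^n$ via $k_j = \dot{s}_{i_j}^{\gamma_j}$. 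Distinctness of these classes in $M_w$ follows because $1$ and $\dot{s}_{i_j}$ lie in different cosets of $T$ in $N_{K_{(i_j)}}(T)$, so no chain of $T^n$-substitutions can move one normalized representative into another.

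It remains to verify $S^1$-invariance at these candidates. For finite simple roots ($i_j \neq 0$), $\dot{s}_{i_j}$ is a constant loop and is pointwise fixed by rotation, so nothing to check. For the affine root $\alpha_0$, the canonical lift $\dot{s}_0$ is a non-constant loop whose matrix entries carry $t^{\pm 1}$; however a direct computation in the defining matrix form of $K_{(0)}$ shows $\theta \cdot \dot{s}_0 = \dot{s}_0 \cdot u(\theta)$ for some explicit $u(\theta) \in T$. Since the $T^n$-equivalence absorbs precisely such right-$T$ multiplications, the rotation twist propagates through the remaining factors exactly as in the $T$-analysis, and all $2^n$ candidates survive as $(T \times S^1)$-fixed points.

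The bijection with galleries is then the map $\gamma \mapsto [\dot{s}_{i_1}^{\gamma_1}, \ldots, \dot{s}_{i_n}^{\gamma_n}]$, and we get $2^{\ell(w)}$ fixed points. The main obstacle is the $S^1$-check in the affine case: one has to see that although $\dot{s}_0$ is genuinely non-constant as a loop, the $T^n$-quotient is large enough to eat the rotation; this is the only step where the affine structure (and not just the finite Weyl theory) enters in an essential way.
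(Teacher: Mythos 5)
Your argument is correct and follows essentially the same route as the paper, which simply reads the fixed points off the quotient description $(K_{(i_1)}\times\cdots\times K_{(i_n)})/T^n$ and the equivalence relation. You supply details the paper leaves implicit --- the normalizer computation $N_{K_{(i_j)}}(T)=T\sqcup T\dot{s}_{i_j}$, the distinctness of the $2^n$ classes, and the check that rotating the non-constant loop $\dot{s}_0$ only produces a right $T$-factor absorbed by the $T^n$-quotient --- all of which are consistent with the paper's setup.
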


\subsection{Euler class of tangent spaces at fixed points}
In the general situation where a torus $T$ acts on a manifold $M$, suppose that the $T$-action has isolated fixed points. Then for every $p \in M^T$, there is an induced $T$-action on the tangent space $T_p M$. Furthermore of $M$ is even dimensional, then $T_p M$ splits into a direct sum of copies of $\bigoplus\limits_{i=1}^n \mathbb{C}_i$, and $T$ acts on each $\mathbb{C}_i$ by a weight $\beta_i$. This is what is known as the \textbf{isotropy representation} of $T$. \\

With the coordinates on $M_w$ from the previous section, we can then compute the weights of the isotropy representation of $T \times S^1$ on the fixed points of $M_w$. \\

At the fixed point $k = (k_1, \cdots, k_n)$, we notice that the affine chart sends $(z_1, \cdots, z_n)$ to $(\Psi_{\alpha_{i_1}}(m_{z_1}) \cdot k_1, \cdots, \Psi_{\alpha_{i_n}}(m_{z_n}) \cdot k_n)$. By the definition of Bott-Samelson manifolds, we have
\begin{align*}
&(\Psi_{\alpha_{i_1}}(m_{z_1}) \cdot k_1, \Psi_{\alpha_{i_1}}(m_{z_2}) \cdot k_2, \cdots, \Psi_{\alpha_{i_n}}(m_{z_n}) \cdot k_n) \\
= \hspace{0.2cm}&(\Psi_{\alpha_{i_1}}(m_{z_1}), k_1 \Psi_{\alpha_{i_1}}(m_{z_2}) \cdot k_2,\cdots, \Psi_{\alpha_{i_n}}(m_{z_n}) \cdot k_n) \\
= \hspace{0.2cm}&(\Psi_{\alpha_{i_1}}(m_{z_1}), k_1 \Psi_{\alpha_{i_1}}(m_{z_2}) k_1^{-1},\cdots, (k_1k_2 \cdots k_{n-1)}\Psi_{\alpha_{i_n}}(m_{z_n}) (k_1k_2 \cdots k_{n-1})^{-1} \cdot k_n) \\
\intertext{(here we multiplied $k_2^{-1}k_1^{-1}$ to the end of the second component and multiplied $k_1k_2$ at the beginning of the third component, and so on...)}
= \hspace{0.2cm}&(\Psi_{\alpha_{i_1}}(m_{z_1}), k_1 \Psi_{\alpha_{i_1}}(m_{z_2}) k_1^{-1},\cdots, (k_1k_2 \cdots k_{n-1)}\Psi_{\alpha_{i_n}}(m_{z_n}) (k_1k_2 \cdots k_{n-1})^{-1}) \\
\intertext{(in this final step we multiplied $k_n^{-1}$ to the end of the last component)}
\end{align*}

If one of the $k_i$ is $id$, then it does not change the $\Psi_{\alpha_{i_n}}(m_{z_n})$ factor. However if $k_i = \cdot{s_i}$ then the conjugation $k_i \Psi_{\alpha_{i_n}}(m_{z_n})k_i^{-1}$ will have the effect of the simple reflection $s_{\alpha_i}$. Therefore $(T \times S^1)$ acts by 
\begin{equation}
(t, \theta) \cdot (z_1, \cdots, z_n) = (t^{-k_1(\alpha_{i_1})} \cdots z_1, t^{-k_1k_2(\alpha_{i_2})}, \cdots, t^{-k_1k_2\cdots k_n(\alpha_{i_n})} )
\end{equation}
\\
At the fixed point $k = (k_1, \cdots, k_n)$, we differentiate the affine chart $\Phi_{(k_1, \cdots, k_n)}$ at 0. First we compute $\frac{\partial}{\partial z_i} m_{z_i}$ for each $z_i$. If $k_i = e$ then 
\begin{align*}
\frac{\partial}{\partial z_i} m_{z_i} = &
	\frac{\partial}{\partial z_i} \frac{1}{\sqrt{1 + |z_i|^2}}
	\begin{pmatrix}
	1 & -\bar{z_i} \\
	z_i & 1
	\end{pmatrix}
	\\
	= &	
	\begin{pmatrix}
	\frac{-\bar{z_i}}{2(1 + |z_i|^2)^{\frac{3}{2}}} & \frac{\bar{z_i}^2}{2(1 + |z_i|^2)^{\frac{3}{2}}}\\
	\frac{-\frac{|z_i|^2}{2\sqrt{1 + |z_i|^2}} + 1}{(1 + |z_i|^2)} & \frac{-\bar{z_i}}{2(1 + |z_i|^2)^{\frac{3}{2}}}
	\end{pmatrix}	
\end{align*}
Evaluating at $z_i = 0$ we get 
$\begin{pmatrix}
	0 & 0 \\
	1 & 0
	\end{pmatrix}$. 
Likewise if we take $\frac{\partial}{\partial \bar{z_i}} m_{z_i}|_{z_i = 0}m_{z_i} = $ then we get the matrix $\begin{pmatrix}
	0 & -1 \\
	0 & 0
	\end{pmatrix}$ 
	
Since each $\Psi_{\alpha}$ is an embedding into the $SU(2)$-subgroup of $G$ corresponding to the root $\alpha$. The matrices computed above get sent via $T\Psi$ to $F_{\alpha}$ and $-E_{\alpha}$ respectively. \\
Now suppose $k_i = \dot{s_i}$ then since $s_i$ is the simple reflection corresponding to the simple root $\alpha_i$. This means that the tangent vectors at $(\cdots, \dot{s_i}, \cdots)$ are $(T\Psi_{\alpha_i})_{k_i} \frac{\partial}{\partial z_i} m_{z_i}$ and $(T\Psi_{\alpha_i})_{k_i} \frac{\partial}{\partial \overline{z_i}} m_{z_i}$. By a similar calculation as above, we get that $\frac{\partial}{\partial z_i} m_{z_i}|_{z_i = 0} = \begin{pmatrix}
	1 & 0 \\
	0 & 0
	\end{pmatrix}$  and 
$\frac{\partial}{\partial \overline{z_i}}|_{z_i = 0} = 
\begin{pmatrix}
	0 & 0 \\
	0 & -1
	\end{pmatrix}$.
Then we see that $T\Psi(\begin{pmatrix}
	1 & 0 \\
	0 & 0
	\end{pmatrix}) \cdot \dot{s_i}$ equals exactly $E_{\alpha_i}$ and 
	$T\Psi(\begin{pmatrix}
	0 & 0 \\
	0 & -1
	\end{pmatrix}) \cdot \dot{s_i}$ equals exactly $-F_{\alpha_i}$.

Now taking into account the $(k_1k_2 \cdots k_{n-1})(\hdots) (k_1k_2 \cdots k_{n-1})^{-1}$ conjugation action on the $n$-th component, we get the following corollary.

\begin{corollary}
	The isotropy weights of the tangent space of $M_w$ at the fixed point $k = (k_1, \cdots, k_n)$ are given by $\left\{s_{i_1}^{e_1} \cdot (-\alpha_{i_j}), \hdots,   \left( \prod\limits_{k=0}^n s_{i_k}^{e_k}\right) \cdot( -\alpha_{i_n})\right\}$, where $e_j = 1$ if $k_j = \dot{s}_{i_j}$ and $e_j = 0$ if $k_j = \dot{1}$. \\
	The equivariant Euler class of the tangent space of $M_w$ at the fixed point $k = (k_1, \cdots, k_n)$ is given by $\left( \prod\limits_{j=1}^n (\prod\limits_{k=1}^j s_{i_k}^{e_k}) \cdot( -\alpha_{i_j}) \right)$. 
\end{corollary}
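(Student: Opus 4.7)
The proof is essentially an assembly and uniformization of the computations already carried out in the preceding paragraphs. The plan is first to use the affine chart $\Phi_{(k_1,\ldots,k_n)}$ to identify $T_k M_w$ with $\mathbb{C}^n$ via the coordinates $z_1,\ldots,z_n$; this gives a complex structure in which the $T\times S^1$ isotropy representation splits as $\bigoplus_{j=1}^n \mathbb{C}_{\beta_j}$ for weights $\beta_j$ to be determined. Once this decomposition is in hand, the equivariant Euler class at the fixed point is simply $\prod_{j=1}^n \beta_j$, so the second assertion of the corollary will follow from the first as soon as the $\beta_j$ are computed.

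To pin down each $\beta_j$ I would proceed in two stages. \textbf{Stage one:} treat the $j$-th factor $\Psi_{\alpha_{i_j}}(m_{z_j})\cdot k_j$ in isolation. The $SU(2)$-calculation preceding the corollary shows that for $k_j=1$ the derivative $\partial/\partial z_j|_{0}$ pushes forward under $T\Psi_{\alpha_{i_j}}$ to $F_{\alpha_{i_j}}$, so conjugation by $t\in T$ scales $z_j$ with weight $-\alpha_{i_j}$; for $k_j=\dot{s}_{i_j}$ the analogous calculation yields $E_{\alpha_{i_j}}\cdot \dot{s}_{i_j}$, hence weight $+\alpha_{i_j}=s_{i_j}(-\alpha_{i_j})$. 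The two cases are packaged uniformly as
\begin{equation*}
\beta_j^{(1)}\;=\;s_{i_j}^{e_j}(-\alpha_{i_j}).
\end{equation*}
\textbf{Stage two:} the normalization carried out above rewrote the $j$-th factor as a conjugate of $\Psi_{\alpha_{i_j}}(m_{z_j})$ by $k_1 k_2\cdots k_{j-1}$. Under the isotropy action this transports $\beta_j^{(1)}$ by the Weyl group product $s_{i_1}^{e_1}\cdots s_{i_{j-1}}^{e_{j-1}}$, yielding
\begin{equation*}
\beta_j \;=\; \Bigl(\prod_{m=1}^{j} s_{i_m}^{e_m}\Bigr)(-\alpha_{i_j}),
\end{equation*}
which is the weight claimed. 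Taking the product over $j$ then gives the Euler class formula.

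The $S^1$-direction is automatically tracked by the affine convention $\alpha_0=(-\sum_i\alpha_i,1)$, so no separate rotation bookkeeping is required. The main delicate point is stage one in the case $k_j=\dot{s}_{i_j}$: one must verify that the canonical lift $\dot{s}_{\alpha}=\exp(F_\alpha)\exp(-E_\alpha)\exp(F_\alpha)$ together with the chart formula genuinely produces the weight $+\alpha_{i_j}$ on $z_j$ and not $-\alpha_{i_j}$, i.e.\ that the identifications $T\Psi_{\alpha_{i_j}}\bigl(\smash{\begin{smallmatrix}1&0\\0&0\end{smallmatrix}}\bigr)\cdot \dot{s}_{i_j}=E_{\alpha_{i_j}}$ from the preceding computation have exactly the sign needed to make $\beta_j^{(1)}=s_{i_j}^{e_j}(-\alpha_{i_j})$ hold uniformly for $e_j\in\{0,1\}$. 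Once this sign is fixed, the Weyl-conjugation transport of stage two and the Euler class product formula finish the proof mechanically.
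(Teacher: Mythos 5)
Your proposal is correct and follows essentially the same route as the paper: identify $T_kM_w$ with $\mathbb{C}^n$ via the affine chart, compute the local weight of each factor from the derivatives of $m_{z_j}$ at $0$ (giving $F_{\alpha_{i_j}}$ versus $E_{\alpha_{i_j}}\cdot\dot{s}_{i_j}$, hence $s_{i_j}^{e_j}(-\alpha_{i_j})$), and then transport by the conjugation $k_1\cdots k_{j-1}$ to pick up the prefix product of reflections. The sign verification you flag in the $k_j=\dot{s}_{i_j}$ case is exactly the matrix computation the paper carries out, so nothing is missing.
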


\section{Geometric Quantization}

\subsection{Prequantum line bundles}\label{preq}

A prequantum line bundle over a symplectic manifold $(M, \omega)$ is a Hermitian line bundle with connection $(L, \langle, \rangle, \nabla)$, such that the curvature $\Omega$ of the connection is cohomologous to $\omega$. Let $\Phi$ denote the moment map of a Hamiltonian $G$-action on $M$, then for all $X \in \mathfrak{g}$ we define the quantum operator $H_X := \nabla_{X^{\#}} + i\langle \Phi, X \rangle$, where $X^{\#}$ is the fundamental vector field associated to $X$. \\
The map $X \mapsto H_X$ is a Lie algebra homomorphism, and each $H_X$ acts on the space of sections of $L$. Therefore we can consider the sections of $L$ as a representation of $G$ (through the exponential map). Note that if $p \in M$ is a fixed point, then $\nabla_{X^{\#}} = 0$ for all $X \in \mathfrak{g}$. So the character of $g = \exp(X)$ is $\exp(i\langle \Phi, X\rangle)$. \\
If in addition $M$ is Kahler, then we can use the Kahler polarization on $M$ to get a representation of $G$ on the space of holomorphic sections. 

\subsection{Line bundles on $\Omega G$}

Let $\mathcal{L}(\lambda)$ denote the prequantum line bundle on $\Omega G$ associated to the dominant weight $\lambda$. This bundle can be restricted to Schubert varieties $X_w$ and then pulled back to the Bott-Samelson manifold $M_w$ to get a prequantum line bundle $\eta^*\mathcal{L}(\lambda)|_{X_w}$ on $M_w$. 

\begin{proposition}[Kumar]
The line bundles on $\Omega G$ are parametrized by $k\overline{\omega_0}, k \in \mathbb{Z}$ where $\overline{\omega_0}$ is the 0-th fundamental weight of $\hat{\mathfrak{g}}$. Therefore $Pic(\Omega G) \cong \mathbb{Z}$. 
\end{proposition}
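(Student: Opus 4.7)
The plan is to compute $H^2(\Omega G;\mathbb{Z})$ from the cellular structure furnished by the Bruhat decomposition, then show the first Chern class $c_1 \colon \operatorname{Pic}(\Omega G) \to H^2(\Omega G;\mathbb{Z})$ is an isomorphism, and finally identify a generator with $\overline{\omega_0}$.

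First I would use that $\Omega G \simeq Gr$ is an ind-projective variety, exhausted by the Schubert varieties $X_w$, each paved by Bruhat cells of even real dimension, indexed by the minimal-length coset representatives in $\overline{W}/W$. Consequently $\Omega G$ inherits a CW structure with only even-dimensional cells, so all odd cohomology vanishes and $H^{2k}(\Omega G;\mathbb{Z})$ is free abelian on the Poincar\'e duals of the Schubert classes of complex codimension $k$. There is a unique such representative of length one --- namely the affine simple reflection $s_0$ --- so $H^2(\Omega G;\mathbb{Z}) \cong \mathbb{Z}$, generated by the fundamental class of the first nontrivial Schubert subvariety $X_{s_0} \cong \mathbb{CP}^1$.

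Next I would pass from cohomology to the Picard group via the exponential sheaf sequence applied to each Schubert variety. Each $X_w$ is rational, since it is dominated by the Bott-Samelson desingularization $M_w$, which is an iterated $\mathbb{CP}^1$-bundle; hence $H^1(X_w;\mathcal{O}_{X_w}) = 0$ and $c_1$ is injective. Surjectivity onto the torsion-free part of $H^2$ follows by exhibiting the Borel-Weil-type line bundles $\mathcal{L}(\lambda)$ induced from integral characters of the Iwahori $\mathcal{B}$ (equivalently, from integrable characters of the affine Kac-Moody algebra). Taking the inverse limit of $\operatorname{Pic}(X_w)$ along the filtration of $\Omega G$ by Schubert varieties then yields $\operatorname{Pic}(\Omega G) \cong \mathbb{Z}$.

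Finally, to identify the generator I would invoke a Borel-Weil picture for the loop group: a line bundle on $Gr = G_{\mathbb{C}}((t))/G_{\mathbb{C}}[[t]]$ corresponds to a character of $G_{\mathbb{C}}[[t]]$, and since $G$ is simply connected the reductive quotient $G_{\mathbb{C}}$ admits no nontrivial characters, so all such line bundles arise from characters of the central extension of the loop group, parametrized by integers $k$ corresponding to weights $k\overline{\omega_0}$. The hard part will be verifying that $\mathcal{L}(\overline{\omega_0})$ is truly primitive in $\operatorname{Pic}(\Omega G)$ rather than a proper multiple of the generator: this amounts to a direct period computation showing that the integral of the curvature of $\mathcal{L}(\overline{\omega_0})$ over $X_{s_0} \cong \mathbb{CP}^1$ equals $2\pi$, which in turn fixes the normalization of the invariant form on $\mathfrak{g}$ entering the definition of $\omega$.
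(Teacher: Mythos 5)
The paper does not actually prove this proposition; it is quoted from Kumar, where it is a special case of the computation $\operatorname{Pic}(\mathcal{G}/\mathcal{P}_Y)\cong\bigoplus_{i\notin Y}\mathbb{Z}\,\omega_i$ for Kac--Moody flag varieties, specialized to $Y=\{1,\dots,\ell\}$ so that $\overline{\omega_0}$ is the only surviving fundamental weight. Your outline is essentially the standard proof of that statement and its skeleton is sound: the Bruhat paving by even-dimensional cells gives $H^{odd}=0$ and $H^2(X_w;\mathbb{Z})\cong\mathbb{Z}$ generated by the class of $X_{s_0}\cong\mathbb{CP}^1$ (the unique length-one minimal coset representative in $\overline{W}/W$, since every finite simple reflection already lies in $W$), and the degree of $\mathcal{L}(\lambda)$ on that $\mathbb{CP}^1$ is $\langle\lambda,\alpha_0^\vee\rangle$, so the primitivity you flag as ``the hard part'' is immediate from $\langle\overline{\omega_0},\alpha_0^\vee\rangle=1$; the curvature-period computation gives the same answer once the basic inner product is normalized by $\|\theta\|^2=2$, but it is not needed.

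The one genuine gap is the inference ``$X_w$ is rational, hence $H^1(X_w;\mathcal{O}_{X_w})=0$.'' For a \emph{singular} projective variety, rationality does not control $H^1(\mathcal{O})$: a nodal plane cubic is rational yet has $H^1(\mathcal{O})\neq 0$, and the birational invariance of $h^{0,q}$ that you are implicitly using holds only for smooth varieties. What is actually needed is that $X_w$ is normal with rational singularities, so that $R^i\eta_*\mathcal{O}_{M_w}=0$ for $i>0$ and hence $H^i(X_w,\mathcal{O}_{X_w})\cong H^i(M_w,\mathcal{O}_{M_w})=0$; this is a nontrivial theorem (Kumar, Mathieu), and it is precisely the $\lambda=0$ case of Proposition \ref{isom} above, so the gap closes by citing that result rather than rationality. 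With that repair, and the observation that the restriction maps $\operatorname{Pic}(X_{w_{n+1}})\to\operatorname{Pic}(X_{w_n})$ are isomorphisms (both sides being $\mathbb{Z}$ on compatible generators), the inverse-limit step and the conclusion $\operatorname{Pic}(\Omega G)\cong\mathbb{Z}\,\overline{\omega_0}$ go through as you describe.
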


\section{The Atiyah-Bott fixed point theorem} 

In this section we state a special case of the Atiyah-Bott Lefschetz formula for equivariant indices of elliptic operators. Let $T$ be a torus and let $M$ be an almost-complex $T$-manifold equipped with a $T$-equivariant vector bundle $V$. Then the operator which applies to our case is the Dirac-Dolbeault operator $\overline{\partial}_V + \overline{\partial}^{*}_V$. The index of this operator is given by the alternating sum of sheaf cohomologies $\displaystyle\sum\limits_i (-1)^i tr (H^i(M, V))$ where $T$ has a natural action on each cohomology group $H_i(M, V)$. \\
Furthermore supposed that $p \in M^T$ is a fixed point, then there is a natural action of $T$ on $T_p M$ (see Section~3.3). This gives rise to a dual $T$-action on the cotangent space $T_p^*M$. Likewise since $V$ is $T$-equivariant, we get a $T$-action on the fiber $V_p$ as well.  
\label{ab-formula}
Now we state a very important formula by Atiyah and Bott \cite{ab}:
\begin{theorem}
	Let $T$ be a torus. Assume that $M$ is a manifold with a Hamiltonian $T$-action and that the fixed point set $M^T$ is finite. Then for any $T$-equivariant vector  bundle $V$ on $M$, we get that for all $t \in T$:
	\begin{equation}
	\displaystyle\sum\limits_i (-1)^i tr (t|_{H^i(M, V)}) = \displaystyle\sum\limits_{p \in M^T} \frac{tr (t|_{V_p})}{\det(Id - t|_{T^*_p M})}
	\end{equation}
\end{theorem}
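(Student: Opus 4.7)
The plan is to deduce this formula from the general Atiyah-Bott Lefschetz theorem for elliptic complexes, applied to the Dolbeault complex twisted by $V$ (or, more generally, the spin-c Dirac complex in the almost-complex case). First I would realize the sheaf cohomology as the cohomology of the elliptic complex
$$\Omega^{0,0}(V) \xrightarrow{\bar{\partial}_V} \Omega^{0,1}(V) \xrightarrow{\bar{\partial}_V} \cdots \xrightarrow{\bar{\partial}_V} \Omega^{0,n}(V),$$
where $n = \dim_{\mathbb{C}} M$. The $T$-action on $M$ together with its equivariant lift to $V$ gives a geometric endomorphism of this complex for each $t \in T$, whose Lefschetz number is precisely the left-hand side $\sum_i(-1)^i \operatorname{tr}\!\bigl(t|_{H^i(M,V)}\bigr)$.

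Second, I would invoke the general Atiyah-Bott Lefschetz theorem: provided every fixed point $p$ of $t$ is non-degenerate, i.e.\ $\text{Id} - (dt)_p$ is invertible on $T_pM$, the Lefschetz number equals a sum of local contributions, one per fixed point. Since $M^T$ is assumed finite, a generic $t \in T$ satisfies $M^t = M^T$, and non-degeneracy follows from Bochner linearization of the compact torus action near each $p$.

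Third, I would compute the local contribution at a fixed point $p$. Bochner linearization gives $T$-equivariant complex coordinates near $p$ in which the isotropy representation on $T^{1,0}_pM$ diagonalizes with weights $\beta_1,\dots,\beta_n$. In these coordinates the germ of the Dolbeault complex at $p$ is the Koszul complex on $T^*_pM$ tensored with $V_p$, and the Koszul identity
$$\sum_{q=0}^n (-1)^q \operatorname{tr}\!\bigl(t\,|\,\Lambda^q T^*_pM\bigr) \;=\; \prod_{j=1}^n \bigl(1 - t^{-\beta_j}\bigr) \;=\; \det\!\bigl(\text{Id} - t\,|\,T^*_pM\bigr)$$
produces the denominator of the local contribution, while the action on the bundle factor gives $\operatorname{tr}(t|_{V_p})$ as the numerator. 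Summing over $p \in M^T$ yields the right-hand side.

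The genuinely deep step — and the main obstacle if one were proving this from scratch — is the general Atiyah-Bott Lefschetz formula itself, whose proof relies on heat-kernel asymptotics or a careful regularization argument to localize the Lefschetz number to a finite sum over the fixed-point set. Taking that as a black box (citing \cite{ab}), the remaining work here is bookkeeping: identifying the LHS as a Lefschetz number, carrying out the Koszul determinant calculation, and extending the identity from generic $t$ to all of $T$ by continuity wherever the RHS denominators are nonvanishing.
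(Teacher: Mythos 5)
This theorem is not proved in the paper at all --- it is quoted verbatim as a known result of Atiyah and Bott \cite{ab} --- and your proposal is the standard, correct derivation of this special case from the general Lefschetz fixed point theorem for elliptic complexes: realize the left side as the Lefschetz number of the twisted Dolbeault (or spin-c Dirac) complex, compute the local term at each fixed point via the Koszul identity $\sum_q (-1)^q \operatorname{tr}(t|_{\Lambda^q T^*_pM}) = \det(\mathrm{Id} - t|_{T^*_pM})$, and extend from generic $t$ (where $M^t = M^T$) to all of $T$ by continuity. Since you, like the paper, ultimately take the general Atiyah--Bott theorem as the black box, the two treatments agree in substance; your write-up simply makes explicit the specialization that the paper leaves to the reference.
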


Our goal is to derive a formula for the based loop group $\Omega SU(2)$ in the following form:
\begin{equation}\displaystyle\sum\limits_i (-1)^i tr ((t, \theta)|_{H^i(\Omega SU(2), V)}) = \displaystyle\sum\limits_{w \in \overline{W}} tr ((t, \theta)|_{V_w}) \cdot R_w(\Omega SU(2))
\end{equation} 
for all $(t, \theta) \in T \times S^1$. \\

\textbf{Remark:} From this point on we will abuse notation slightly and suppress the torus elements in the computations. I.e. $tr ((t, \theta)|_{V_w})$ will simply be denoted as $tr (V_w)$. Equations containing such terms will be understood to hold for all elements of the torus. \\

Here $R_w(\Omega SU(2)): T \times S^1 \rightarrow \mathbb{C}$ are certain rational functions in $T \times S^1$ which live in Kostant and Kumar's nil-Hecke ring (see Section 7). Since the fixed points of the $(T \times S^1)$-action are indexed by the affine Weyl group elements, this formula is in the exact same form as the Atiyah-Bott formula, except that it contains an infinite sum on the RHS. Note that written in this form, the analogous rational function for compact smooth $G$-manifolds would be $R_{p}(M) = \det(1 - T^*_p M)^{-1}$. 
In this case the character of the tangent cone $ch (gr(\mathcal{O}_{p, M}))$ is exactly equal to $\det(1 - T^*_p M)^{-1} = R_{p}(M)$. For a singular space however, the numerator of the character of the tangent cone will not necessarily be 1. \\

We run into a small technical difficulty: the based loop group has a filtration by Schubert varieties, which are in general very singular. Therefore we cannot apply this theorem directly. However, we can apply this result to the Bott-Samelson manifolds, which are desingularizations of Schubert varieties. We are allowed to do so due to the following result:

\begin{proposition}[Grossberg-Karshon, Kumar]  \cite{kum}, \cite{gk} \label{isom}
$H^p(M_w, \eta^*\mathcal{L}(\lambda)|_{X_w}) \cong H^p(X_w, \mathcal{L}(\lambda)|_{X_w})$
\end{proposition}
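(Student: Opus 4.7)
The plan is to derive the isomorphism from the Leray spectral sequence for the proper birational desingularization $\eta: M_w \to X_w$, reducing everything to the statement that Schubert varieties have rational singularities. The fact that $\eta$ is birational is already built into Magyar's topological model: on the open dense Bruhat cell of $X_w$, the multiplication map has an inverse given by reading off the word, so $\eta$ is an isomorphism over a Zariski-open subset of $X_w$.

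First I would apply the projection formula to rewrite
\begin{equation*}
R^q\eta_{*}\bigl(\eta^{*}\mathcal{L}(\lambda)|_{X_w}\bigr) \;\cong\; R^q\eta_{*}(\mathcal{O}_{M_w}) \otimes_{\mathcal{O}_{X_w}} \mathcal{L}(\lambda)|_{X_w}.
\end{equation*}
This reduces the comparison of cohomology on $M_w$ and $X_w$ to understanding the higher direct images of the \emph{structure sheaf} of the Bott-Samelson manifold.

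Next, I would invoke (or prove by induction on $\ell(w)$) the \textbf{rational singularities} statement
\begin{equation*}
\eta_{*}\mathcal{O}_{M_w} \;\cong\; \mathcal{O}_{X_w}, \qquad R^q\eta_{*}\mathcal{O}_{M_w} = 0 \text{ for } q > 0.
\end{equation*}
In the finite-dimensional setting this is classical (Kempf, Ramanathan); for affine Schubert varieties it is due to Kumar and Mathieu via Frobenius splitting and is recorded in Kumar's book. An alternative, more geometric argument exploits the iterated $\mathbb{P}^1$-bundle structure of $M_w$: forgetting the last factor gives a smooth $\mathbb{P}^1$-fibration $M_w \to M_{w'}$ where $w' = s_{i_1}\cdots s_{i_{n-1}}$, and a careful induction on $\ell(w)$ together with base change and the vanishing $H^q(\mathbb{P}^1,\mathcal{O}) = 0$ for $q > 0$ yields the desired higher direct image vanishing.

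Finally, granted this vanishing, the Leray spectral sequence
\begin{equation*}
E_2^{p,q} \;=\; H^{p}\bigl(X_w,\, R^q\eta_{*}\eta^{*}\mathcal{L}(\lambda)|_{X_w}\bigr) \;\Longrightarrow\; H^{p+q}\bigl(M_w,\, \eta^{*}\mathcal{L}(\lambda)|_{X_w}\bigr)
\end{equation*}
collapses at $E_2$ because only the row $q=0$ survives, and that row is exactly $H^{p}(X_w, \mathcal{L}(\lambda)|_{X_w})$. This gives the required isomorphism. The main obstacle is the rational-singularities step: in the affine/Kac-Moody setting one cannot appeal directly to the smooth finite-dimensional theory, and the cleanest rigorous path goes through Frobenius splitting of Schubert varieties combined with semicontinuity to lift the vanishing to characteristic zero—the inductive $\mathbb{P}^1$-bundle argument is simpler but requires extra care with the ample cone and the boundary divisors to carry through cleanly.
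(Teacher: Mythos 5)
The paper offers no proof of this proposition: it is quoted as a known result of Grossberg--Karshon and Kumar, so there is nothing internal to compare against. Your reconstruction is the standard argument from the cited literature (in particular Kumar's book): the projection formula reduces the question to the higher direct images of $\mathcal{O}_{M_w}$, the rational-resolution property $\eta_{*}\mathcal{O}_{M_w}\cong\mathcal{O}_{X_w}$, $R^{q}\eta_{*}\mathcal{O}_{M_w}=0$ for $q>0$ (established for Kac--Moody Schubert varieties by Kumar and Mathieu, with normality of $X_w$ as the companion input), and the degenerate Leray spectral sequence then give the isomorphism, which is moreover $(T\times S^1)$-equivariant since $\eta$ is. You correctly flag that the rational-singularities step is the genuine content and cannot be waved through by the naive iterated-$\mathbb{P}^1$ induction alone; provided you treat that step as the cited external input rather than something to be proved ad hoc, the argument is complete and is essentially the proof the references supply.
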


Furthermore, Kumar proved that
\label{invlimit}
\begin{theorem}[\cite{kum} Theorem 8.1.25 p. 271]  
$\varprojlim\limits_{w \in \overline{W}} H^0(M_w, \eta^*\mathcal{L}(\lambda)|_{X_w})^* \cong L_{\lambda}$
where $L_\lambda$ is the irreducible representation of the Kac-Moody group with highest weight $\lambda$. Here elements of $\overline{W}$ are ordered by the Bruhat order.
\end{theorem}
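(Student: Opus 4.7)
The plan is to reduce the statement to the Borel--Weil theorem for Schubert varieties in the Kac--Moody setting, and then pass to the limit representation-theoretically. First, applying Proposition~\ref{isom} replaces each $H^0(M_w, \eta^*\mathcal{L}(\lambda)|_{X_w})$ with $H^0(X_w, \mathcal{L}(\lambda)|_{X_w})$, so it suffices to identify $\varprojlim_{w \in \overline{W}} H^0(X_w, \mathcal{L}(\lambda)|_{X_w})^{*}$ with $L_\lambda$.

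The core step is the identification $H^0(X_w, \mathcal{L}(\lambda)|_{X_w})^{*} \cong V_w(\lambda)$, where $V_w(\lambda) \subset L_\lambda$ is the Demazure module, i.e.\ the cyclic $\mathcal{B}$-submodule generated by an extremal weight vector of weight $w\lambda$. I would prove this by induction on the length $\ell(w)$: the base case $w = e$ gives a single point $X_e$ on which $\mathcal{L}(\lambda)$ is the one-dimensional highest-weight space. For the inductive step, write $w = s_i w'$ with $\ell(w) = \ell(w') + 1$ and use the $\mathbb{P}^1$-fibration $X_w \to X_{w'}$ coming from a reduced expression. Kumar's cohomology vanishing theorem gives $H^{>0}(X_w, \mathcal{L}(\lambda)|_{X_w}) = 0$ for $\lambda$ dominant, which combined with Demazure's short exact sequence of sections along the fibration reproduces, after dualizing, the Demazure operator formula $V_w(\lambda) = \mathcal{U}(\mathfrak{p}_i) \cdot V_{w'}(\lambda)$, where $\mathfrak{p}_i$ is the parabolic Lie algebra generated by $\mathfrak{b}$ and $f_i$.

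Next I would verify compatibility of the limit maps: for $v \le w$ in Bruhat order, the closed inclusion $X_v \hookrightarrow X_w$ induces a surjective restriction of sections (surjectivity again via vanishing), which dualizes to the natural inclusion $V_v(\lambda) \hookrightarrow V_w(\lambda)$. Under this identification the system $\{H^0(X_w, \mathcal{L}(\lambda)|_{X_w})^{*}\}_{w \in \overline{W}}$ transposes to the direct system $\{V_w(\lambda)\}$ of Demazure submodules, and the limit in the statement computes $\bigcup_{w \in \overline{W}} V_w(\lambda)$. Finally, one checks this union exhausts $L_\lambda$: every weight vector in $L_\lambda$ is obtained from the highest weight vector $v_\lambda$ by applying finitely many Chevalley generators $f_i$, and hence lies in $V_w(\lambda)$ for any $w$ long enough to contain this finite string of reflections in a reduced expression.

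The main obstacle is the cohomology vanishing statement on singular Schubert varieties, which is precisely what underwrites both the identification with Demazure modules and the surjectivity of restriction. In the Kac--Moody setting this is nontrivial and relies on Kumar's or Mathieu's Frobenius splitting techniques rather than on any direct calculation; without it the short exact sequences used in the induction would fail to split off higher cohomology, and the identification $H^0(X_w, \mathcal{L}(\lambda)|_{X_w})^{*} \cong V_w(\lambda)$ would break down.
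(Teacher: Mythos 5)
The paper does not actually prove this statement---it is quoted verbatim from Kumar (Theorem 8.1.25), so there is no in-paper argument to compare against; your sketch follows the standard route of that cited source (reduce to Schubert varieties via Proposition~\ref{isom}, identify $H^0(X_w,\mathcal{L}(\lambda)|_{X_w})^{*}$ with the Demazure module by induction on $\ell(w)$ using cohomology vanishing, then exhaust $L_\lambda$ as the directed union of Demazure modules), and it is correct in outline. The one imprecision worth flagging is the phrase ``the $\mathbb{P}^1$-fibration $X_w \to X_{w'}$'': no such morphism exists in general, and the induction is instead run on the birational proper map $\mathcal{P}_i \times^{\mathcal{B}} X_{w'} \to X_w$ (or on the Bott--Samelson resolution), whose projection to $\mathcal{P}_i/\mathcal{B} \cong \mathbb{P}^1$ is what produces the Demazure operator; also, since the restriction maps are surjective, the duals form a \emph{direct} system, so the limit in the statement is really $\varinjlim$, as your argument implicitly (and correctly) treats it.
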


Therefore it suffices to apply the Atiyah-Bott fixed point theorem to the Bott-Samelson manifolds to obtain the character of the virtual line bundle $\displaystyle\sum\limits_{i} (-1)^i H^i(\mathcal{L}(\lambda)|_{X_w})$. Then we will take a limit over words $w$ in the affine Weyl group to obtain $\displaystyle\sum\limits_{i} (-1)^i H^i(\Omega SU(2), \mathcal{L}(\lambda))$ in the following sections. 

\subsection{Application to the Bott-Samelson Manifolds}
To summarize our setup so far, we now have:
\begin{itemize}
\item A smooth presymplectic manifold $(M_w, \eta^*(\omega|_{X_w}))$
\item A Hamiltonian $(T \times S^1)$-action on $M_w$ with moment map $\mu \circ \eta: M_w \rightarrow Lie(T \times S^1)^*$, and $2^n$ fixed points parametrized by \textbf{galleries} $\{\gamma \in \Gamma_w\}$
\item A prequantum line bundle $\eta^*\mathcal{L}(\lambda)|_{X_w}$ on $M_w$
\end{itemize}

Applying the Atiyah-Bott formula yields
\begin{align*}
\displaystyle\sum\limits_i (-1)^i tr (H^i(M_w, \eta^*\mathcal{L}(\lambda)|_{X_w})) 
&= \displaystyle\sum\limits_{p \in M_w^{T \times S^1}} \frac{tr (\eta^*\mathcal{L}(\lambda)|_{X_w}|_p)}{\det(1 - T^*_p M_w)}   \\
&= \displaystyle\sum\limits_{\gamma \in \Gamma_w} \frac{tr (\eta^*\mathcal{L}(\lambda)|_{X_w}|_{\gamma})}{\det(1 - T^*_{\gamma} M_w)} \text{\hspace{0.5cm}(the fixed points are galleries associated to $w$)} \\
&= \displaystyle\sum\limits_{\gamma \in \Gamma_w} \frac{e^{-i\mu(\gamma)}}{\det(1 - T^*_{\gamma} M_w)} \text{\hspace{0.5cm}(by Section \ref{preq})}\\
&= \displaystyle\sum\limits_{v \le w} e^{i(v \cdot \lambda)} \displaystyle\sum\limits_{\gamma: u(\gamma) = v}\frac{1}{\det(1 - T^*_{\gamma} M_w)} \\
&= \displaystyle\sum\limits_{v \le w} e^{i(v \cdot \lambda)} \displaystyle\sum\limits_{\gamma: u(\gamma) = v}\frac{1}{(1 - e^{s_{i_1}^{\gamma_1} \alpha_{i_1}}) \cdots (1 - e^{s_{i_1}^{\gamma_1} \cdots s_{i_n}^{\gamma_n}\alpha_{i_n}})}.
\end{align*}\label{char}
Here the last equality is due to the fact that $\det(1 - T^*_{\gamma} M_w) = \displaystyle\prod\limits_{\alpha} (1 - e^{-\alpha})$, and $\alpha$ ranges over all isotropy weights of $T_p^* M_w$. These were computed in the section on Bott-Samelson manifolds. \\
Note that by Proposition \ref{isom}, the last line is also equal to $\displaystyle\sum\limits_i (-1)^i tr (H^i(X_w, \mathcal{L}(\lambda)|_{X_w}))$.

\section{The localization theorem in equivariant K-theory}

The following two subsections are due to Kumar. (See \cite{kum} Chapters 11 and 12)

\subsection{Kostant-Kumar nil-Hecke ring} 
\label{nil-Hecke}
Let $\mathcal{G}$ be a Kac-Moody group and $\mathcal{T}$ be a maximal torus. Then denote by $A[\mathcal{T}]$ the group algebra of the character group of $T$. Denote by $Q[\mathcal{T}]$ the fraction field of $A[T]$ and let $\tilde{Q}[\mathcal{T}]$ be the completion of $A[\mathcal{T}]$: the set of formal infinite sums $\displaystyle\sum\limits_{e^{\lambda} \in X(\mathcal{T})} n_{\lambda}e^{\lambda}$ ($n_{\lambda} \in \mathbb{Z}$) with coefficients in $Q[\mathcal{T}]$. Then form the $Q[\mathcal{T}]$ -vector space $Q[\mathcal{T}]_{\overline{W}}$ with basis $\{\delta_w\}_{w \in \overline{W}}$. The space $Q[\mathcal{T}]_{\overline{W}}$ is an associative ring with multiplication given by

\begin{equation}
\left(\displaystyle\sum\limits_v q_v \delta_v \right)\cdot \left(\displaystyle\sum\limits_w q_w \delta_w \right) = 
\displaystyle\sum\limits_w q_v(vq_w) \delta_{vw} 
\end{equation}
and identity $\delta_e$ (see \cite{kum}, p. 450). 

\subsection{A special element}
Let $s_i$ be an element in $\overline{W}$ which is a simple reflection. Then there is a special element $T_{s_i} = \displaystyle\frac{1}{1 - e^{\alpha_i}} \delta_{s_i} + \frac{1}{1 - e^{-\alpha_i}} \delta_{e} \in Q[\mathcal{T}]_{\overline{W}}$. For a general word $w \in \overline{W}$, we define $T_w = T_{s_{i_1}} \cdots T_{s_{i_n}}$. Then we can calculate the coefficients of $T_w$ with respect to the $Q[\mathcal{T}]$-basis. Write $T_w = \displaystyle\sum\limits_{v \le w} b_{w, v} \delta_v$, then we have
\begin{equation}
b_{w, v} = \displaystyle\sum \left( (1 - e^{-s_{i_1}^{\varepsilon_1}\alpha_{i_1}}) \cdots (1 - e^{-s_{i_1}^{\varepsilon_1} \cdots s_{i_n}^{\varepsilon_n}\alpha_{i_n}}) \right)^{-1}
\end{equation}\label{bwv}
where the sum is over all n-tuples $(\varepsilon_1, \cdots, \varepsilon_n) \in \{0, 1\}^n$ satisfying $s_{i_1}^{\varepsilon_1} \cdots s_{i_n}^{\varepsilon_n} = v$. 
\begin{example}
	Let $\mathcal{G}$ be the Kac-Moody group associated to the affine Lie algebra $\hat{\mathfrak{sl}_2}$. We would like to compute $b_{s_1s_0, s_1}$. The only subword of $s_1s_0$ equalling $s_1$ is $s_1^1s_0^0$. So $b_{s_1s_0, s_1} = 
	\displaystyle\frac{1}{1 - e^{-s_1\alpha_1}} \cdot 
	\displaystyle\frac{1}{1 - e^{-s_1\alpha_0}}
	= 
	\displaystyle\frac{1}{1 - e^{\alpha_1}} \cdot 
	\displaystyle\frac{1}{1 - e^{-(\alpha_0 + 2\alpha_1)}}$
\end{example}
We also define an involution on $Q[\mathcal{T}]_{\overline{W}}$ by $\overline{e^{\lambda}} := e^{-\lambda}$. 

\subsection{Character of the tangent cone}
Let $Y = \mathcal{G} / \mathcal{B}$ denote the affine flag variety. Its fixed points under the $T$ action by left multiplication are parametrized by $v \in \overline{W}$. Kumar proved the following statement about the $T$-characters of tangent cones in the Schubert varieties $X_w$ around these points. 
\begin{theorem} [Kumar, \cite{kum} Theorem 12.1.3, p. 451] 
	\begin{equation}
	ch (gr(\mathcal{O}_{v, X_w})) = \overline{b_{w, v}}
	\end{equation}
\end{theorem}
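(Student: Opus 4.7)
The plan is to identify $\overline{b_{w,v}}$ and $ch(gr(\mathcal{O}_{v, X_w}))$ as the coefficients of $e^{v \cdot \lambda}$ in two \emph{a priori} different expressions for the same equivariant Euler characteristic
\[
\chi_T\bigl(X_w, \mathcal{L}(\lambda)|_{X_w}\bigr) := \sum_i (-1)^i \operatorname{tr}\bigl(H^i(X_w, \mathcal{L}(\lambda)|_{X_w})\bigr),
\]
and then to extract the claimed equality by letting $\lambda$ range over all dominant integral weights and invoking linear independence of the characters $\{e^{v \cdot \lambda}\}_{v \le w}$ as functions of $\lambda$.

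The first expression is essentially built into Section \ref{char}. Proposition \ref{isom} identifies $\chi_T(X_w, \mathcal{L}(\lambda)|_{X_w})$ with $\chi_T(M_w, \eta^*\mathcal{L}(\lambda)|_{X_w})$, and the Atiyah-Bott-Lefschetz computation on the smooth manifold $M_w$ already rewrites this as
\[
\sum_{v \le w} e^{v \cdot \lambda} \sum_{\gamma : u(\gamma) = v} \prod_{k=1}^{n} \frac{1}{1 - e^{s_{i_1}^{\gamma_1} \cdots s_{i_k}^{\gamma_k}\alpha_{i_k}}}.
\]
The inner sum coincides term-by-term with the image of the formula for $b_{w,v}$ from Section \ref{nil-Hecke} under the involution $\overline{e^{\mu}} = e^{-\mu}$, so the whole expression equals $\sum_{v \le w} e^{v \cdot \lambda}\, \overline{b_{w,v}}$.

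The second, and harder, expression is a \emph{singular} Lefschetz formula directly on $X_w$,
\[
\chi_T\bigl(X_w, \mathcal{L}(\lambda)|_{X_w}\bigr) = \sum_{v \le w} e^{v \cdot \lambda} \cdot ch\bigl(gr(\mathcal{O}_{v, X_w})\bigr).
\]
To prove it, I would choose a generic cocharacter $\chi : \mathbb{C}^{\times} \to T$ whose weights on every cotangent cone of $X_w$ are strictly positive, and consider the Bia{\l}ynicki-Birula-type attracting stratification $X_w = \bigsqcup_{v \le w} C_v$, where $C_v := \{x \in X_w : \lim_{t \to 0} \chi(t) \cdot x = v\}$. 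Each $C_v$ flows equivariantly under the one-parameter subgroup to the tangent cone $\operatorname{Spec}(gr(\mathcal{O}_{v, X_w}))$ at $v$; the restriction of $\mathcal{L}(\lambda)$ degenerates to the trivial bundle with fiber character $e^{v \cdot \lambda}$, and $T$-equivariant Euler characteristics are preserved under the degeneration. Iterating the long exact sequence associated with the filtration $X_w \supset X_w \setminus C_w \supset \cdots$ collapses, since each graded piece is affine with vanishing higher cohomology, to give the displayed formula. This singular localization is the principal obstacle, as the hypotheses of the ordinary Atiyah-Bott-Lefschetz theorem are not satisfied on $X_w$; once it is established, comparing with the Bott-Samelson expression and matching coefficients of $e^{v \cdot \lambda}$ using linear independence in $\lambda$ yields $\overline{b_{w,v}} = ch(gr(\mathcal{O}_{v, X_w}))$.
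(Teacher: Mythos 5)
First, note that the paper offers no proof of this statement at all: it is quoted verbatim as Theorem 12.1.3 of Kumar's book, so there is nothing internal to compare against and your proposal has to stand on its own. Its first half is fine and essentially reproduces the paper's Section 5.1 computation: Atiyah--Bott on $M_w$ plus Proposition \ref{isom} gives $\chi_T(X_w,\mathcal{L}(\lambda)) = \sum_{v\le w} e^{v\cdot\lambda}\,\overline{b_{w,v}}$, and the term-by-term match with the gallery expansion of $b_{w,v}$ under the involution is correct.

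The genuine gap is in the ``singular Lefschetz formula.'' Your mechanism for it --- the Bia{\l}ynicki--Birula/Bruhat stratification $X_w=\bigsqcup_{v\le w}C_v$ with each stratum flowing to the tangent cone at $v$ --- cannot work as described, for a dimension reason: $C_v$ is the Bruhat cell $\mathcal{B}v\mathcal{B}/\mathcal{B}\cong\mathbb{A}^{\ell(v)}$, whereas $\operatorname{Spec}(gr(\mathcal{O}_{v,X_w}))$ has dimension $\ell(w)$ (the tangent cone of an irreducible variety at any point has the dimension of the variety). The cell at $v$ therefore does not degenerate to the tangent cone at $v$; what degenerates to the tangent cone is a full affine neighbourhood of $v$ in $X_w$, and neighbourhoods of distinct fixed points overlap, so their contributions cannot simply be summed. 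Moreover, the ``long exact sequence associated with the filtration'' you invoke is an extension-by-zero/excision tool for constructible sheaves; for coherent sheaves the analogous Cousin/local-cohomology filtration produces contributions supported along entire cells (hence governed by $X_v$, of dimension $\ell(v)$), not by tangent cones at points. To make the second expression rigorous one needs either Thomason's localization theorem in $T$-equivariant K-theory applied to $[\mathcal{O}_{X_w}]$ inside the smooth ambient (ind-)variety $\mathcal{G}/\mathcal{B}$, \emph{together with} a separate local argument identifying the localized class at an attractive fixed point $v$ with the $T$-Hilbert series of $\widehat{\mathcal{O}}_{v,X_w}$ (which does equal $ch(gr(\mathcal{O}_{v,X_w}))$, since the $\mathfrak{m}$-adic filtration preserves weight-space dimensions), or else Kumar's own induction on $\ell(w)$ through the Demazure/Bott--Samelson recursion for $b_{w,v}$. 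Your final coefficient-extraction step also deserves a word: the coefficients $\overline{b_{w,v}}$ and $ch(gr(\mathcal{O}_{v,X_w}))$ live in a completion of $Q[\mathcal{T}]$, so linear independence of $\lambda\mapsto e^{v\cdot\lambda}$ must be justified by an Artin-type independence-of-characters argument after specializing at generic torus elements; this is standard but not automatic over a ring with zero divisors.
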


However we are working with the case of the affine Grassmannian $Gr = \mathcal{G} / \mathcal{P}$ where $\mathcal{P}$ is the maximal parabolic containing all the finite simple roots. Here the $T$-fixed points are parametrized by minimal length cosets in $\overline{W} / W$. So we make a slight modification to the formula above.

Recall the application of the Atiyah-Bott fixed point formula to the Bott-Samelson manifold $M_w$ given in section (\ref{char}), the last line contained expressions which may be written in the form of $b_{w, v}$ for certain words~$w, v$. Indeed we may rewrite it as

\begin{align*}
&\displaystyle\sum\limits_{v \le w} e^{i(v \cdot \lambda)} \displaystyle\sum\limits_{\gamma; u(\gamma) = v}\frac{1}{(1 - e^{s_{i_1}^{\gamma_1} \alpha_{i_1}}) \cdots (1 - e^{s_{i_1}^{\gamma_1} \cdots s_{i_n}^{\gamma_n}\alpha_{i_n}})} \\
= & \displaystyle\sum\limits_{v \le w} e^{i(v \cdot \lambda)} \cdot \overline{b_{w, v}} \\
= & \displaystyle\sum\limits_{\text{cosets   } vW, v \le w} e^{i(v \cdot \lambda)} \displaystyle\sum\limits_{v^{'} \in \overline{W}, vW = v^{'}W} \overline{b_{w, v^{'}}}.
\end{align*}
The last equality follows because the fixed points in the Schubert variety are indexed by cosets $vW$ and galleries whose realizations belong to the same cosets are mapped to the same value under $\mu$. \\
By virtue of Proposition \ref{isom}, we now have a fixed point formula for the Schubert varieties as well:

\begin{proposition}  
	\begin{equation}
	\displaystyle\sum\limits_i (-1)^i tr (H^i(X_w, \mathcal{L}(\lambda)|_{X_w})) = \displaystyle\sum\limits_{\text{cosets   } vW, v \le w} e^{i(v \cdot \lambda)} \displaystyle\sum\limits_{v^{'} \in \overline{W}, vW = v^{'}W} \overline{b_{w, v^{'}}}
	\end{equation}
\end{proposition}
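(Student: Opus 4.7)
\medskip

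The plan is to leverage the computation already carried out at the end of Section \ref{char}, where the Atiyah-Bott formula was applied to the Bott-Samelson manifold $M_w$. That computation yields
\[
\sum_i (-1)^i \operatorname{tr} \bigl(H^i(M_w, \eta^*\mathcal{L}(\lambda)|_{X_w})\bigr) = \sum_{v \le w} e^{i(v \cdot \lambda)} \sum_{\gamma:\, u(\gamma) = v}\frac{1}{(1 - e^{s_{i_1}^{\gamma_1}\alpha_{i_1}}) \cdots (1 - e^{s_{i_1}^{\gamma_1} \cdots s_{i_n}^{\gamma_n}\alpha_{i_n}})}.
\]
By Proposition \ref{isom}, the left-hand side equals $\sum_i (-1)^i \operatorname{tr}(H^i(X_w, \mathcal{L}(\lambda)|_{X_w}))$, so it remains only to rewrite the right-hand side in the form claimed by the proposition.

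First, I would recognize the inner sum as the image of $b_{w,v}$ under the involution $\overline{\,\cdot\,}$ of Section \ref{nil-Hecke}. Indeed, the formula for $b_{w,v}$ given in Section \ref{nil-Hecke} sums over exactly the same subwords $(\varepsilon_1,\ldots,\varepsilon_n)$ with $s_{i_1}^{\varepsilon_1}\cdots s_{i_n}^{\varepsilon_n}=v$, and the only difference is that the exponents appearing there carry a minus sign. Since subwords and galleries with the same realization are in bijection, the involution $e^\alpha \mapsto e^{-\alpha}$ identifies $\overline{b_{w,v}}$ with the inner sum above, giving the intermediate identity
\[
\sum_i (-1)^i \operatorname{tr} \bigl(H^i(X_w, \mathcal{L}(\lambda)|_{X_w})\bigr) = \sum_{v \le w} e^{i(v \cdot \lambda)} \,\overline{b_{w, v}}.
\]

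Next, I would reorganize this sum according to cosets in $\overline{W}/W$. Since $\mathcal{L}(\lambda)$ descends from the affine flag variety $\mathcal{G}/\mathcal{B}$ to the affine Grassmannian $Gr = \mathcal{G}/\mathcal{P}$, the weight $\lambda$ is $W$-invariant (it is a multiple of $\overline{\omega_0}$ by the proposition of Kumar stated in Section~5.2). Consequently $e^{i(v\cdot\lambda)}$ depends only on the coset $vW$, and I can factor it outside the sum over elements of that coset, obtaining
\[
\sum_{v \le w} e^{i(v \cdot \lambda)} \,\overline{b_{w, v}} = \sum_{\substack{\text{cosets }vW \\ v \le w}} e^{i(v \cdot \lambda)} \sum_{\substack{v' \in \overline{W} \\ v'W = vW}} \overline{b_{w, v'}},
\]
which is exactly the formula in the proposition.

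The main conceptual step (and the one that is closest to being an obstacle) is justifying the grouping by cosets: one must be confident that galleries whose realizations fall in the same $W$-coset are precisely those mapped by $\eta$ to the same $T$-fixed point of $X_w \subset Gr$, and that they all produce the same factor $e^{i(v\cdot\lambda)}$. This reduces, as noted above, to the $W$-invariance of $\lambda$, which holds because of the parabolic $\mathcal{P}$ containing all finite simple root subgroups. The rest of the argument is bookkeeping: identifying the Atiyah-Bott fixed-point contributions with (barred) nil-Hecke coefficients and invoking Proposition \ref{isom} to transfer the statement from $M_w$ to $X_w$.
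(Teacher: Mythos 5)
Your proposal is correct and follows essentially the same route as the paper: apply the Atiyah--Bott computation on $M_w$, identify the inner gallery sum with $\overline{b_{w,v}}$ via the involution, regroup by cosets $vW$, and transfer to $X_w$ by Proposition \ref{isom}. Your explicit justification of the coset regrouping via the $W$-invariance of $\lambda = k\overline{\omega_0}$ is a slightly more detailed version of the paper's remark that galleries with realizations in the same coset map to the same value under $\mu$, but it is not a different argument.
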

In the language of Section 4, $R_v(X_w) = \displaystyle\sum\limits_{v^{'} \in \overline{W}, vW = v^{'}W} \overline{b_{w, v^{'}}}$. In the next section we will derive an effective formula for $R_v(X_w)$ for the Schubert varieties of $\Omega SU(2)$.

\section{Calculations for G = SU(2)}

We can consider the space of polynomial based loops in $SU(2)$ as the affine Grassmannian $\mathcal{G}/\mathcal{P}$ where~$\mathcal{P}$ is the maximal parabolic subgroup corresponding to the set $I = \{1\}$. For the reduced Weyl group~$\overline{W} / W_{\{1\}}$ (see Section \ref{afweyl} for definitions), it is totally ordered w.r.t Bruhat order, with an ascending chain $s_0 \le s_1 s_0 \le s_0 s_1 s_0 \le ... $ (see Section 2). Then we denote the length $n$ element of this chain by $w_n$. In this way, the $(T \times S^1)$-fixed points of $\Omega SU(2)$ are indexed by the integers. \\

Moreover $\Omega SU(2)$ has a filtration by Schubert varieties, and in this case they correspond exactly to the elements of the ascending chain described above, i.e. $X_{s_0} \subset X_{s_1s_0} \subset X_{s_0s_1s_0} \subset \cdots \subset \Omega SU(2)$. For more details, see \cite{mitchell}.\\

For a reduced word $w = s_{i_1} \cdots s_{i_n}$ define $S_w = \displaystyle\prod\limits_{j=1}^n e^{s_{i_1} \cdots s_{i_{j-1}}(\alpha_{i_j})}$. Denote $S_{w_n}$ by $S_n$, and similarly denote $R_{w_n}, X_{w_n}$ by $R_n, X_n$ respectively. 

\begin{definition}
We define the following restricted partitions: \\
$P_{n, k}(m):= $ the number of integer partitions of $m$ into at most $n$ parts, all of which are at most $k$. \\
$q_{n, k}(m) := $ the number of integer partitions of $m$ into exactly $n$ parts, all of which are at most $k$.
\end{definition}
\begin{example}
$P_{3, 2}(4) = 2$ since we have $2 + 2$, $2 + 1 + 1$,  $q_{3, 2}(4) = 1$ since we only have the partition $2 + 1 + 1$. 
\end{example}
We also denote the number of all integer partitions of $m$ by $P(m)$. 
\\

We take a moment here to give a reminder on the roots of the affine Lie algebra $\hat{\mathfrak{sl}}_2$. 
In contrast to semisimple Lie algebras, affine Lie algebras have both real and imaginary roots (roots which have nonpositive length). Let $\delta = \alpha_0 + \alpha_1$ denote the smallest positive imaginary root of $\hat{\mathfrak{sl}}_2$. The real roots of $\hat{\mathfrak{sl}}_2$ are given by the set $\triangle_{re} = \{\pm\alpha_1 + n\delta \hspace{0.1cm} | \hspace{0.1cm} n \in \mathbb{Z}\}$ and the positive real roots form the set $\triangle_{re} = \{\alpha_1\} \cup \{\pm\alpha_1 + n\delta \hspace{0.1cm} | \hspace{0.1cm} n \in \mathbb{N} \}$. For more details on roots of affine Lie algebras, see \cite{kac}.\\

\begin{theorem} \label{v1}
The rational functions in the localization formula for the affine Grassmannian $\Omega G$ are given by 
\begin{equation}
   R_m(\Omega G) = \displaystyle\frac{(-1)^{m}f(w_m)(1 - e^{w_m \cdot \alpha_1}) \displaystyle\sum\limits_{n = 0}^{\infty} P(n)e^{n\delta} }{\displaystyle\prod\limits_{\alpha \in \triangle^{+}_{re}} (1 - e^{\alpha}) }
\end{equation}
where $\triangle^{+}_{re}$ denotes the \textbf{positive real} roots of the affine Lie algebra $\hat{\mathfrak{sl}}_2$. 
\end{theorem}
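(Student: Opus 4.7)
The plan is to apply the Schubert-variety fixed-point formula obtained at the end of Section~7 to $X_w = X_{w_n}$ at the fixed point corresponding to the minimal coset representative $v = w_m$, and then pass to the inverse limit $n \to \infty$ using Theorem~\ref{invlimit}. For $SU(2)$, the finite Weyl group is $W = \{e, s_1\}$, so the coset $w_m W$ consists of exactly two elements $w_m$ and $w_m s_1$, and hence
\begin{equation*}
R_{w_m}(X_{w_n}) \;=\; \overline{b_{w_n, w_m}} \,+\, \overline{b_{w_n, w_m s_1}}.
\end{equation*}
The rational function $R_m(\Omega G)$ is then obtained by letting $n \to \infty$ along the chain $X_{w_n} \subset X_{w_{n+1}} \subset \cdots$ in the completion $\tilde Q[\mathcal{T}]$.

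Next I would carry out the combinatorial enumeration of the subwords contributing to $b_{w_n, w_m}$ and $b_{w_n, w_m s_1}$. Taking $w_n$ as an alternating word in $s_0, s_1$ (say starting with $s_0$), I exploit the fact that in the affine Weyl group of $\hat{\mathfrak{sl}}_2$ the elements $s_0 s_1$ and $s_1 s_0$ are translations of infinite order, so the only cancellations occurring in a subword come from adjacent equal letters produced by skipping intermediate letters. Tracking the positions at which such ``inserted'' canceling pairs $s_i s_i$ appear along the alternating spine, each gallery realizing $w_m$ or $w_m s_1$ decomposes into a distinguished initial skeleton (of length $m$ or $m+1$) together with a choice of how many canceling pairs are inserted in each of the available slots. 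This is precisely the combinatorial content of the restricted partitions $P_{n,k}(m)$ and $q_{n,k}(m)$ defined just before the theorem statement.

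From there the rational contributions organize themselves as products of geometric series, one for each slot, because the successive isotropy weights that appear in the denominators $(1 - e^{s_{i_1}^{\varepsilon_1}\cdots s_{i_j}^{\varepsilon_j}\alpha_{i_j}})$ along the alternating word are exactly $\alpha_1$, $\delta - \alpha_1$, $\alpha_1 + \delta$, $2\delta - \alpha_1$, $\ldots$, which is an enumeration of $\triangle^{+}_{re}$. Passing to $n \to \infty$ turns each slot sum into an unrestricted geometric series in $e^\delta$, and applying Euler's identity
\begin{equation*}
\prod_{k \ge 1} \frac{1}{1 - e^{k\delta}} \;=\; \sum_{n \ge 0} P(n)\, e^{n\delta}
\end{equation*}
produces the partition generating function in the numerator. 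Simultaneously, the finite partial products in the denominators assemble (in $\tilde Q[\mathcal{T}]$) into the infinite product $\prod_{\alpha \in \triangle^{+}_{re}}(1 - e^{\alpha})$. The factor $(1 - e^{w_m \cdot \alpha_1})$ emerges when combining the two contributions $\overline{b_{w_n, w_m}}$ and $\overline{b_{w_n, w_m s_1}}$: multiplying the former gallery by the right-most ``extra'' $s_1$ sets up a near-bijection with galleries for $w_m s_1$, so the sum collapses to a single expression with a prefactor $1 - e^{w_m \cdot \alpha_1}$, while the sign $(-1)^m$ and the function $f(w_m)$ absorb the accumulated weight shifts from applying $\overline{\,\cdot\,}$ to the partial products (i.e.\ they record the specific skeleton $S_{w_m}$ produced by the length-$m$ alternating spine).

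The main obstacle will be the careful bookkeeping at the heart of paragraph three: showing that the double infinity --- infinite sum over galleries and infinite product over weights --- stabilizes correctly in $\tilde Q[\mathcal{T}]$, and verifying rigorously that the ``slot insertion'' combinatorics matches the claimed denominator product indexed by positive real roots. In particular one has to check that the weights appearing inside a slot, once translated by the partial products $s_{i_1}^{\varepsilon_1}\cdots s_{i_j}^{\varepsilon_j}$, reproduce exactly the arithmetic progressions $\pm\alpha_1 + k\delta$ with $k \ge 1$ that exhaust $\triangle^{+}_{re}$, and that no weight is repeated or missed. Once this matching is verified and the geometric summations performed, the stated formula follows directly.
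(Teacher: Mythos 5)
Your starting point is the same as the paper's: $R_{w_m}(X_{w_n}) = \overline{b_{w_n,w_m}} + \overline{b_{w_n,w_m s_1}}$, followed by a passage to the limit $n\to\infty$. But from there you diverge: you propose a \emph{direct} enumeration of all galleries realizing $w_m$ and $w_m s_1$ inside the alternating word, organized by a ``skeleton plus inserted canceling pairs'' decomposition, whereas the paper never enumerates galleries at all. It instead proves the closed form for $R_m(X_n)$ (Lemma \ref{fin}) by induction on $n$, using the nil--Hecke recursion $b_{w,v} = (1-e^{\alpha_i})^{-1}\bigl(b_{w',v} - e^{-\alpha_i}\, s_i\cdot b_{w',s_i v}\bigr)$ for $w = s_i w'$, with the restricted-partition identities of Propositions \ref{part} and \ref{partcor} carrying the induction step. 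A correct direct enumeration would be a genuinely different and arguably more illuminating proof, since it would explain \emph{why} restricted partitions appear rather than merely verifying that they propagate through the recursion.

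However, as written the proposal has a genuine gap, and it sits exactly where you flag ``the main obstacle.'' Your third paragraph asserts that the denominator weights $s_{i_1}^{\varepsilon_1}\cdots s_{i_j}^{\varepsilon_j}\alpha_{i_j}$ run over $\alpha_1,\ \delta-\alpha_1,\ \alpha_1+\delta,\dots$, i.e.\ over distinct positive real roots. That is true only for the gallery taking every letter. For a gallery that skips letters the partial products lag behind, weights repeat, and --- crucially --- some of the weights are \emph{negative} real roots (already for $w=s_0s_1s_0$ and the gallery $(0,1,1)$ one gets the factor $1-e^{\alpha_1}$ from the weight $-\alpha_1$). These negative weights must be ``polarized'' via $1-e^{-\beta} = -e^{\beta}(1-e^{\beta})\cdot e^{-2\beta}\,/\,(-e^{-\beta})$-type manipulations, and it is precisely this polarization that produces the sign $(-1)^m$ and the prefactor $e^{f(w_m)}=S_{w_m}$ in the final formula; in the paper this is visible in the step using $\alpha_{\overline{m}}-\delta = -\alpha_{\overline{m+1}}$. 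Your proposal treats $(-1)^m f(w_m)$ as something that will ``absorb the accumulated weight shifts'' without exhibiting the mechanism, asserts rather than proves that the slot-insertion count is $P_{n,k}(m)$, and describes the collapse of the two coset terms into the single factor $(1-e^{w_m\cdot\alpha_1})$ only as a ``near-bijection.'' Each of these is exactly the content that a proof must supply (and that the paper supplies by induction), so the argument is a plausible program rather than a proof until that bookkeeping is carried out.
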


\begin{theorem} (Second version)\label{v2} 
The rational functions in the localization formula for the affine Grassmannian $\Omega G$ are given by 
\begin{equation}
   R_m(\Omega G) = \displaystyle\frac{(-1)^{m}f(w_m)(1 - e^{w_m \cdot \alpha_1})  }{\displaystyle\prod\limits_{\alpha \in \triangle^{+}} (1 - e^{\alpha}) }
\end{equation}
where $\triangle^{+}$ denotes the \textbf{positive} roots of the affine Lie algebra $\hat{\mathfrak{sl}}_2$. 
\end{theorem}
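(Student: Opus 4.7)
The plan is to derive Theorem \ref{v2} directly from Theorem \ref{v1} by rewriting the numerator factor $\sum_{n=0}^{\infty} P(n) e^{n\delta}$ as an infinite product over the positive imaginary roots of $\hat{\mathfrak{sl}}_2$, which then absorbs into the denominator to complete the product over all positive roots. So essentially the second version is a cosmetic rephrasing of the first via Euler's classical partition identity.

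First I would recall the structure of the root system of $\hat{\mathfrak{sl}}_2$: the positive roots decompose as $\triangle^+ = \triangle^+_{re} \sqcup \triangle^+_{im}$, where $\triangle^+_{re}$ consists of $\alpha_1$ together with $\pm\alpha_1 + n\delta$ for $n \geq 1$ (as already noted in the excerpt), and the imaginary positive roots are precisely $\{n\delta : n \geq 1\}$, each occurring with multiplicity one since the underlying finite root system has rank one. This gives the factorization
\begin{equation*}
\prod_{\alpha \in \triangle^+} (1 - e^{\alpha}) = \Bigl(\prod_{\alpha \in \triangle^+_{re}} (1 - e^{\alpha})\Bigr) \cdot \prod_{n \geq 1} (1 - e^{n\delta}).
\end{equation*}

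Next I would invoke Euler's product formula for the partition generating function: setting $q = e^{\delta}$,
\begin{equation*}
\sum_{n=0}^{\infty} P(n) e^{n\delta} = \prod_{n \geq 1} \frac{1}{1 - e^{n\delta}}.
\end{equation*}
Substituting this identity into the formula supplied by Theorem \ref{v1}, the factor $\sum_{n=0}^{\infty} P(n) e^{n\delta}$ in the numerator becomes $\bigl(\prod_{n \geq 1}(1 - e^{n\delta})\bigr)^{-1}$, and merging this with the denominator $\prod_{\alpha \in \triangle^+_{re}}(1 - e^{\alpha})$ via the factorization above yields exactly $\prod_{\alpha \in \triangle^+}(1 - e^{\alpha})$ in the denominator, producing the formula claimed in Theorem \ref{v2}.

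The only potential obstacle is bookkeeping: one must be careful that the imaginary roots of $\hat{\mathfrak{sl}}_2$ really are the $n\delta$ with multiplicity one (if the multiplicities were larger, as happens for higher-rank affine algebras, one would need the full Kac-Weyl denominator), and that the manipulation with infinite products is justified inside the completed group ring $\tilde{Q}[\mathcal{T}]$ from Section \ref{nil-Hecke}. Both are standard: multiplicities for $\hat{\mathfrak{sl}}_2$ are recorded in \cite{kac}, and the infinite products above converge formally in the $\delta$-adic sense because each factor has the form $1 - (\text{positive power of } e^{\delta})$. With these caveats addressed, Theorem \ref{v2} follows immediately from Theorem \ref{v1}.
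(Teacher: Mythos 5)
Your proposal is correct and matches the paper's own route: the paper likewise obtains Theorem \ref{v2} from Theorem \ref{v1} by observing that $\sum_{n\ge 0}P(n)e^{n\delta}$ equals $\prod_{n\ge 1}(1-e^{n\delta})^{-1}$ and that these factors supply exactly the imaginary positive roots (each of multiplicity one for $\hat{\mathfrak{sl}}_2$) missing from the real-root product. Your version is in fact slightly more careful, since the paper's displayed product starts at $n=0$ (a typo that would introduce a vanishing factor) and does not spell out the multiplicity-one point.
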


The sum in the numerator in Theorem \ref{v1} is simply the infinite product $\left( \displaystyle\prod\limits_{n=0}^{\infty} (1 - e^{n\delta}) \right)^{-1}$, and each factor encodes an imaginary root, complementing the infinite product over all positive real roots in Theorem \ref{v1}. \\

Let us do a few sample computations. \\
$R_1(X_3) = \displaystyle\frac{(-1)S_1(1 - e^{w_1 \cdot \alpha_1})(1 + e^{\delta} + e^{2\delta})}
{ ( 1 - e^{\alpha_1})( 1 - e^{\alpha_0})( 1 - e^{\alpha_0 + \delta})( 1 - e^{\alpha_0 + 2\delta}) }$ \\

$R_2(X_4) = \displaystyle\frac{S_2(1 - e^{w_2 \cdot \alpha_1})(1 + e^{\delta} + e^{2\delta} + e^{3\delta})}
{ ( 1 - e^{\alpha_0})( 1 - e^{\alpha_1})( 1 - e^{\alpha_1 + \delta})( 1 - e^{\alpha_1 + 2\delta})( 1 - e^{\alpha_1 + 3\delta}) }$ \\

$R_0(X_4) = \displaystyle\frac{(1 - e^{\alpha_1})(1 + e^{\delta} + e^{2\delta})(1 + e^{2\delta})}
{ ( 1 - e^{\alpha_0})( 1 - e^{\alpha_0 + \delta})( 1 - e^{\alpha_1})( 1 - e^{\alpha_1 + \delta}) }$ \\

$R_{0}(X_3) = \displaystyle\frac{(1 + e^{\delta} + e^{2\delta})}
{ ( 1 - e^{\alpha_0})( 1 - e^{\alpha_0 + \delta})( 1 - e^{\alpha_1 + \delta}) }$

To prove the theorem we first prove a finite-dimensional version, let $\overline{m}$ denote $m (mod 2)$ for any integer $m$.

\begin{lemma} 
\label{fin}
The rational functions in the localization formula for the Schubert varieties are given by 
\begin{align*}
R_m(X_n) =
   \ddfrac{(-1)^{m}f(w_m)(1 - e^{w_m \cdot \alpha_1}) \left( \displaystyle\sum\limits_{j=0}^{ \left( n - \floor*{\frac{n-m}{2}} \right)\floor*{\frac{n-m}{2}} } P_{\floor*{\frac{n-m}{2}} , n - \floor*{\frac{n-m}{2}} }(j) e^{j\delta} \right) }
   {
   \left(\displaystyle\prod\limits_{i = 0}^{m + \floor*{\frac{n-m}{2}}} 1 - e^{\alpha_{\overline{m+1}} + j\delta} \right) 
   \left( \displaystyle\prod\limits_{j = 0}^{\ceil*{\frac{n-m}{2}} - 1} 1 - e^{\alpha_{\overline{m}} + j\delta} 
   \right)
   }
\end{align*}
\end{lemma}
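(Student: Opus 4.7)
The plan is to prove the formula by induction on $n$ using a recursion derived from the Kostant--Kumar nil-Hecke ring. For the base case $n = m$, only the all-ones gallery of $w_m$ has realization in $w_m W$, since the other coset element $w_m s_1$ has length $m+1 > n$. Its isotropy weights $v_j \alpha_{i_j} = -v_{j-1}\alpha_{i_j}$ run through the negatives of the inversion set $\{\alpha_{\overline{m+1}} + k\delta\}_{k=0}^{m-1}$ of $w_m$, so normalizing each factor via $(1 - e^{-\beta})^{-1} = -e^\beta(1 - e^\beta)^{-1}$ yields
$$R_m(X_m) \;=\; \frac{(-1)^m S_{w_m}}{\displaystyle\prod_{k=0}^{m-1}(1 - e^{\alpha_{\overline{m+1}} + k\delta})}.$$
This matches the claim once one cancels the factor $(1 - e^{w_m \alpha_1}) = (1 - e^{\alpha_{\overline{m+1}} + m\delta})$ in the claimed numerator against the $k = m$ factor of the claimed denominator (observe that the partition sum collapses to $1$ and the second denominator product is empty when $n = m$).

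For the inductive step, the factorization $T_{w_{n+1}} = T_{s_i} T_{w_n}$ with $i = \overline{n+2}$, together with the nil-Hecke multiplication rule $\delta_v(q\,\delta_u) = (vq)\,\delta_{vu}$, produces the coefficient-level recursion
$$b_{w_{n+1}, v} \;=\; \frac{s_i\, b_{w_n,\, s_i v}}{1 - e^{\alpha_i}} + \frac{b_{w_n, v}}{1 - e^{-\alpha_i}}.$$
Summing over $v \in w_m W$ and applying the involution (which commutes with the $s_i$-action), and using that $s_i$ sends $w_m W$ to $w_{m-1} W$ when $i = \overline{m+1}$ is the first letter of $w_m$ and to $w_{m+1} W$ otherwise, yields the mixed recursion
$$R_m(X_{n+1}) \;=\; \frac{s_i\, R_{m \pm 1}(X_n)}{1 - e^{-\alpha_i}} + \frac{R_m(X_n)}{1 - e^{\alpha_i}},$$
with the $+$ sign when $n \equiv m \pmod{2}$ and the $-$ sign otherwise.

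The induction closes by substituting the claimed formula on both sides of this recursion and simplifying. The prefactor $(-1)^m f(w_m)(1 - e^{w_m\alpha_1})$ transforms in a controlled way under $s_i$, and after cancelling it and the bulk of the denominators the identity reduces, in each parity case, to a generating-function identity for restricted partitions that follows from the classical recursion $P_{a, b}(j) = P_{a, b-1}(j) + P_{a-1, b}(j-a)$. The hard part will be the bookkeeping in this verification: the parameters $\lfloor (n-m)/2 \rfloor$ and $\lceil (n-m)/2 \rceil$ shift asymmetrically under $n \mapsto n+1$ depending on the parity of $n - m$, so the partition recursion must be invoked in two different forms and the exponential prefactors arising from the $(1-e^{-\beta})^{-1} = -e^\beta (1-e^\beta)^{-1}$ normalization must be tracked with care.
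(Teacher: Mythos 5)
Your proposal follows essentially the same route as the paper's proof: induction on $n$ driven by the nil-Hecke factorization $T_{w_{n+1}} = T_{s_i}T_{w_n}$, yielding the coefficient recursion $b_{w_{n+1},v} = \frac{s_i\, b_{w_n,s_iv}}{1-e^{\alpha_i}} + \frac{b_{w_n,v}}{1-e^{-\alpha_i}}$, then summing over the coset $w_mW$, splitting into the two parity cases $s_iw_m = w_{m\pm1}$, and closing the induction with identities for restricted partitions; your diagonal base case $n=m$ (via the inversion set of $w_m$ and the cancellation of $(1-e^{w_m\cdot\alpha_1})$ against the $j=m$ factor of the denominator) is a cleaner starting point than the paper's $n=1$ but plays the same role. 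Two details need repair before the deferred bookkeeping will actually close. First, when $m=0$ and $i=1$ the reflection $s_1$ fixes the coset $W=w_0W$ rather than moving it to $w_{\pm1}W$, so your dichotomy breaks down as stated (it would call for a nonexistent $R_{-1}$); the recursion there degenerates to $R_0(X_{n+1}) = (1-e^{\alpha_1})^{-1}\left(1 - e^{\alpha_1}s_1\right)R_0(X_n)$, i.e.\ a Demazure operator applied to $R_0(X_n)$, which is exactly the special case the paper singles out. Second, the classical recursion you invoke, $P_{a,b}(j) = P_{a,b-1}(j) + P_{a-1,b}(j-a)$, is misstated: the shift in the last argument should be $j-b$ (or the subscripts on the right swapped), and the two forms actually needed in the two parity cases are $P_{a,b}(j) + P_{a+1,b-1}(j-a-1) = P_{a+1,b}(j)$ and $P_{a+1,b-1}(j) + P_{a,b}(j-b) = P_{a+1,b}(j)$, which are Propositions \ref{part} and \ref{partcor}. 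With those corrections your outline matches the paper's argument step for step.
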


\textbf{Remark:} It is possible to write the sum in the numerator with $j$ ranging from 0 to $\infty$. However, since we are dealing with restricted partitions there is an effective limit $\ell$ above which $P_{a, b}(N) = 0$ for all $N > \ell$. This limit is given exactly by $\ell = ab$, so we have written that limit for the sum in this lemma. 

Before we prove \ref{fin} we will need the following identity on restricted partitions:

\begin{proposition} \label{part}
$P_{a, b}(j) + P_{a+1, b-1}(j - a - 1) = P_{a+1, b}(j)$
\end{proposition}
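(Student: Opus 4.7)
The plan is to give a combinatorial proof by splitting the partitions counted by $P_{a+1,b}(j)$ into two classes according to the number of parts, and matching each class with one summand on the left-hand side.

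First I would note that the right-hand side $P_{a+1,b}(j)$ counts partitions of $j$ into at most $a+1$ parts with each part at most $b$, and every such partition either has at most $a$ parts or has exactly $a+1$ parts. These two cases are disjoint and exhaustive. The partitions of $j$ with at most $a$ parts (each still at most $b$) are exactly those counted by $P_{a,b}(j)$, so they contribute the first summand on the left-hand side. It remains to match the partitions with exactly $a+1$ parts (each at most $b$) with partitions counted by $P_{a+1,b-1}(j-a-1)$.

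For this second class I would construct a bijection by subtracting $1$ from each of the $a+1$ parts. Given $\lambda_1\ge \lambda_2 \ge \cdots \ge \lambda_{a+1}\ge 1$ with $\lambda_i\le b$ and $\sum \lambda_i = j$, set $\mu_i := \lambda_i - 1$. Then $\mu_1\ge\cdots\ge\mu_{a+1}\ge 0$, each $\mu_i\le b-1$, and $\sum\mu_i = j-(a+1)$; after discarding any trailing zeros this is a partition of $j-a-1$ into at most $a+1$ parts with each part at most $b-1$, i.e., one counted by $P_{a+1,b-1}(j-a-1)$. The inverse pads a partition counted by $P_{a+1,b-1}(j-a-1)$ with zeros up to $a+1$ parts and then adds $1$ to each, producing $a+1$ positive parts each at most $b$ summing to $j$. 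Summing the two cases yields the claimed identity.

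There is no real obstacle here: the bijection is the standard ``peel off a column of height $a+1$'' move, and the identity is a restatement of the $q$-Pascal recurrence $\binom{a+b}{a+1}_q = \binom{a+b-1}{a}_q + q^{a+1}\binom{a+b-1}{a+1}_q$ for Gaussian binomial coefficients, whose coefficient of $q^j$ gives exactly the stated relation.
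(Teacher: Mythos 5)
Your proof is correct and follows essentially the same route as the paper: both split $P_{a+1,b}(j)$ according to whether the partition has at most $a$ parts or exactly $a+1$ parts, and both establish the second case via the ``add/remove a column of height $a+1$'' bijection (you subtract $1$ from each part, the paper adds $1$ to each part — the same bijection read in opposite directions). The closing remark identifying the identity with the $q$-Pascal recurrence is a nice piece of context but not a different argument.
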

\begin{proof}
Notice that by definition, $$P_{a+1, b}(j) = \displaystyle\sum\limits_{n=0}^{a+1}q_{n, b}(j) = \displaystyle\sum\limits_{n=0}^{a}q_{n, b}(j) + q_{a+1, b}(j) = P_{a, b}(j) + q_{a+1, b}(j)$$ 
therefore it suffices to prove that $P_{a+1, b-1}(j - a - 1) = q_{a+1, b}(j)$. First we establish some bounds on $j$: If $j > (a+1)b$, then both sides equal 0. Also if $j < a+1$, then both sides equal 0. So we assume that $a+1 \le j \le (a+1)b$. \\
Then $P_{a+1, b-1}(j - a - 1)$ counts the number of Young diagrams with at most $a+1$ rows and at most $b-1$ columns, with $j-a-1 = j-(a+1)$ total blocks. We call this set of Young diagrams $Y_{a+1, b-1}(j-(a+1))$. On the other hand $q_{a+1, b}(j)$ counts the number of Young diagrams with exactly $a+1$ rows and at most $b-1$ columns, with $j$ total blocks, call this set of Young diagrams $y_{a+1, b-1}(j)$. Then we have a map
\begin{align*}
    Y_{a+1, b-1}(j-(a+1)) &\rightarrow y_{a+1, b-1}(j) \\
    (\lambda_1, \cdots, \lambda_{a+1}) &\mapsto (\lambda_1 + 1, \cdots, \lambda_{a+1}+1)
\end{align*}
which is in fact a bijection. Therefore $P_{a+1, b-1}(j - a - 1) = q_{a+1, b}(j)$. 
\end{proof}

\begin{proposition} \label{partcor}
$P_{a+1, b-1}(j) + P_{a, b}(j - b) = P_{a+1, b}(j)$
\end{proposition}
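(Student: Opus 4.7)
The plan is to deduce Proposition \ref{partcor} from Proposition \ref{part} by exploiting the conjugation (transposition) symmetry of Young diagrams. Recall that $P_{a,b}(j)$ counts partitions of $j$ whose Young diagrams fit inside an $a \times b$ rectangle; transposing such a diagram gives a bijection onto partitions fitting inside a $b \times a$ rectangle, so $P_{a,b}(j) = P_{b,a}(j)$ for all $a, b, j$.

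Concretely, I would start from the identity just established,
\begin{equation*}
P_{a,b}(j) + P_{a+1, b-1}(j - a - 1) = P_{a+1, b}(j),
\end{equation*}
and apply the conjugation symmetry to each term. After relabelling the indices (swap the roles of $a$ and $b$ throughout), the identity becomes
\begin{equation*}
P_{b,a}(j) + P_{b+1, a-1}(j - b - 1) = P_{b+1, a}(j),
\end{equation*}
which, upon setting $A = b$ and $B = a - 1$ (so that the indices line up as $(A+1, B-1)$, $(A, B)$, and $(A+1, B)$), gives exactly the statement of Proposition \ref{partcor}. The small bookkeeping point to watch is that the shift $j - a - 1$ in Proposition \ref{part} is a shift by one more than the first index; after conjugation, it correctly becomes a shift by $b$, matching $j - b$ in the target identity.

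The main (and essentially only) obstacle is making sure the relabelling is done cleanly and that the edge cases ($a = 0$ or $b = 0$) work: in those degenerate situations both sides vanish or collapse to a trivial equality, so the symmetry argument still applies. As a sanity check, one could also give a direct combinatorial proof: $P_{a+1,b}(j) - P_{a+1,b-1}(j)$ counts partitions of $j$ with at most $a+1$ parts, each $\leq b$, having at least one part equal to $b$; removing that largest part gives a bijection onto partitions of $j - b$ with at most $a$ parts of size $\leq b$, i.e. $P_{a,b}(j-b)$. Either route yields the identity in a few lines, and I would favour the conjugation argument for brevity since Proposition \ref{part} is already in hand.
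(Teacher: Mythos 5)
Your proposal is correct and takes essentially the same route as the paper: the conjugation symmetry $P_{a,b}(j) = P_{b,a}(j)$ combined with the substitution $a \mapsto b-1$, $b \mapsto a+1$ in Proposition \ref{part} (your intermediate relabelling is stated a little loosely, but the shift $j - a - 1$ does indeed become $j - b$ as you note). The direct bijection you sketch --- peeling off a largest part equal to $b$ --- is a valid independent check, though the paper does not use it.
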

\begin{proof}
By the correspondence to Young diagrams, we see that $P_{a, b}(j) = P_{b , a}(j)$ for all $a, b, j$ just by conjugation. Then the identity follows from Proposition \ref{part} by setting $a = b-1, b = a+1$. 
\end{proof}

Finally we need the following elementary identities:
\begin{proposition} \label{elem}
If $k, m$ are integers with $k > m$ and $k - m$ odd, then 
\begin{itemize}
\item $\left( k - \floor*{\frac{k-m}{2}} \right)\floor*{\frac{k-m}{2}} + (m + \floor*{\frac{k-m}{2}} + 1) = \left( k - \floor*{\frac{k-m}{2}} \right)(\floor*{\frac{k-m}{2}} + 1)$
\item $\left( k - \floor*{\frac{k-(m-1)}{2} } \right)\floor*{\frac{k-(m-1)}{2}} + \ceil*{\frac{k-m}{2}} = \left( k - \floor*{\frac{k-m}{2}} \right)(\floor*{\frac{k-m}{2}} + 1)$
\item $m + \floor*{\frac{k-m}{2}} + 1 = k - \floor*{\frac{k-m}{2}}$
\end{itemize}
\end{proposition}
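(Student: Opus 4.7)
The plan is to reduce all three identities to elementary algebra by introducing a single parameter. Since $k > m$ and $k - m$ is odd by hypothesis, I would write $k - m = 2\ell + 1$ for some non-negative integer $\ell$. A quick parity check then yields
\[
\floor*{\tfrac{k-m}{2}} = \ell, \qquad \ceil*{\tfrac{k-m}{2}} = \ell + 1, \qquad \floor*{\tfrac{k-(m-1)}{2}} = \floor*{\tfrac{2\ell+2}{2}} = \ell + 1,
\]
so every floor/ceiling expression appearing in the statement becomes a polynomial in $k, m, \ell$.

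First I would verify the third identity, since it is the cleanest and gets reused. After substitution it becomes $m + \ell + 1 = k - \ell$, which is just a rearrangement of $k - m = 2\ell + 1$. Next, for the first identity, I would use the third identity to rewrite the ``extra'' summand $m + \floor*{\frac{k-m}{2}} + 1$ as $k - \ell$, so that the LHS reads $(k - \ell)\ell + (k - \ell)$ and factors immediately as $(k - \ell)(\ell + 1)$, matching the RHS.

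Finally, for the second identity, direct substitution turns the LHS into $(k - (\ell+1))(\ell+1) + (\ell+1)$, which factors as $(\ell + 1)\bigl[(k - \ell - 1) + 1\bigr] = (k - \ell)(\ell + 1)$, again matching the RHS. The only real content is confirming the three floor/ceiling evaluations above from the parity hypothesis; once that bookkeeping is in hand, there is no genuine obstacle, and each identity collapses to a one-line algebraic check.
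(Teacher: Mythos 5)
Your proposal is correct: writing $k-m=2\ell+1$ makes all three floor/ceiling expressions evaluate to $\ell$ or $\ell+1$, and each identity then reduces to a one-line factorization, exactly the ``elementary calculation'' the paper invokes without writing out. This is the same (and essentially the only) approach, carried out with all details supplied.
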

\begin{proof}
An elementary calculation. 
\end{proof}

\noindent
\textbf{\underline{Proof of lemma \ref{fin}:}} \\
The proof of the lemma is by induction on $n$. \\
For $n = 1$, $X_{s_0} = M_{s_0} = 
\begin{pmatrix}
a & -\bar{b}t \\
bt^{-1} & \bar{a}
\end{pmatrix}$ where $a, b \in \mathbb{C}$. This space is diffeomorphic to 
 $\mathbb{CP}^1$ and the only fixed points of $X_1$ are the north and south poles whose moment map images are $id$ and $s_0$ respectively (i.e. the correspond to the Weyl group elements $id$ and $s_0$). Using formula (\ref{bwv}) we get that $R_0(X_{s_0}) = R_{id}(X_{s_0}) = \frac{1}{1 - e^{\alpha_0}}$ and $R_1(X_{s_0}) = R_{s_0}(X_{s_0}) = \frac{1}{1 - e^{-\alpha_0}} = \frac{-e^{a_0}}{1 - e^{\alpha_0}}$. So the lemma is true for $n = 1$. \\
Now assume that the lemma is true for $n = k$, now we wish to compute the rational functions $R_m(X_{k+1})$ for all $m \le k+1$. Notice that $w_{k+1} = s_{\alpha} \cdot w_k$ for some simple root $\alpha$, and we recall that $R_m(X_{k+1})$ is a sum of the rational functions $\displaystyle\sum\limits_{mW = m^{'}W}\overline{b_{k+1, m^{'}}}$. Notice that if $w = s_i \cdot w^{'}$ then we have the identity $b_{w, v} = (1 - e^{\alpha_i})^{-1} \left(b_{w^{'}, v} - e^{-\alpha_i} s_i \cdot b_{w^{'}, s_i \cdot v} \right)$, since the subwords of $w$ that equal $v$ are either: \textbf{(1)} $id$ concatenated with the subwords of $w^{'}$ that form $v$, or \textbf{(2)} $s_i$ concatenated with the subwords of $w^{'}$ that form $s_i \cdot v$. And because $s_i$ turns $\alpha_i$ into a negative root, we ``polarize" the second term with a factor of $-e^{\alpha_i}$. Therefore putting together the last two formulas we get $R_m(X_{k+1}) = (1 - e^{\alpha_i})^{-1} \left(R_m(X_k) - e^{\alpha_i} s_i \cdot R_{s_i \cdot w_m}(X_k) \right)$. \\

Notice that when $m = 0$ and $k+1$ is even (i.e. the fixed point is the identity coset and $s_i = s_1$), then $s_1$ and $e$ belong to the same coset, so in this case $R_0(X_{k+1}) = (1 - e^{\alpha_1})^{-1}(1 - e^{\alpha_1} s_1)R_0(X_k)$, which amounts to applying the Demazure operator $D_{s_1}$ to $R_0(X_k)$. \\

There are two cases, either, $s_i \cdot w_m = w_{m-1}$ or $s_i \cdot w_m = w_{m+1}$. \\
Case 1: $s_i \cdot w_m = w_{m-1}$ which implies that $k-m$ is odd \\
In this case we have

\begin{enumerate}
    \item $\floor*{\frac{k-m}{2}} + 1 = \ceil*{\frac{k-m}{2}} = \floor*{\frac{k-(m-1)}{2}}= \ceil*{\frac{k-(m-1)}{2}}$\\
    \item $s_i = s_{\alpha_{\overline{m+1
          }}}$
    \item $s_i \cdot (\alpha_k + j\delta) = s_i \cdot \alpha_k + j\delta$
    \item $s_i \cdot f(w_{m-1}) = -e^{-\alpha_i} \cdot f(w_m)$ by definition
\end{enumerate}

Now by the induction hypothesis we can compute $R_m(X_{k+1})$ readily as 
\begin{align*}
R_m(X_{k+1}) = &(1 - e^{\alpha_i})^{-1} \left(R_m(X_k) - e^{-\alpha_i} s_i \cdot R_{s_i \cdot w_m}(X_k) \right) \\
= &(1 - e^{\alpha_i})^{-1} \left(R_m(X_k) - e^{-\alpha_i} s_i \cdot R_{m-1}(X_k) \right)
\end{align*}
Before carrying out the induction step, we simplify notation by letting 
$$\boldsymbol{A} = \displaystyle\sum\limits_{j=0}^{\left( k - \floor*{\frac{k-m}{2}} \right)\floor*{\frac{k-m}{2}}} P_{\floor*{\frac{k-m}{2}}, k - \floor*{\frac{k-m}{2}}}(j)e^{j\delta}$$ and
   $$\boldsymbol{B} = \displaystyle\sum\limits_{j=0}^{\left( k - \floor*{\frac{k-(m-1)}{2}} \right)\floor*{\frac{k-(m-1)}{2}}} P_{\floor*{\frac{k-(m-1)}{2}}, k - \floor*{\frac{k-(m-1)}{2}}}(j) e^{j\delta}$$
We now give a detailed computation of $R_m(X_k) - e^{-\alpha_i} s_i \cdot R_{s_i \cdot w_m}(X_k)$:

\begin{align*}
    &R_m(X_k) - e^{-\alpha_i} s_i \cdot R_{m-1}(X_k) \\
    = \hspace{1.0cm} &
    \ddfrac{(-1)^{m}f(w_m)(1 - e^{w_m \cdot \alpha_1})   \boldsymbol{A} }
   {
   \left(\displaystyle\prod\limits_{j = 0}^{m + \floor*{\frac{k-m}{2}}} 1 - e^{\alpha_{\overline{m+1}} + j\delta} \right) 
   \left( \displaystyle\prod\limits_{j = 0}^{\ceil*{\frac{k-m}{2}} - 1} 1 - e^{\alpha_{\overline{m}} + j\delta} 
   \right)
   }  \\
   - \hspace{0.5cm} &  e^{\alpha_i} s_i \cdot \ddfrac{(-1)^{m-1}f(w_{m-1})(1 - e^{w_{m-1} \cdot \alpha_1}) \boldsymbol{B} }
   {
   \left(\displaystyle\prod\limits_{j = 0}^{m + \floor*{\frac{k-m}{2}}} 1 - e^{\alpha_{\overline{(m-1)+1}} + j\delta} \right) 
   \left( \displaystyle\prod\limits_{j = 0}^{\ceil*{\frac{k-m}{2}} - 1} 1 - e^{\alpha_{\overline{m-1}} + j\delta} 
   \right)
   } \\
   = \hspace{1.0cm} &
    \ddfrac{(-1)^{m}f(w_m)(1 - e^{w_m \cdot \alpha_1}) \boldsymbol{A} }
   {
   \left(\displaystyle\prod\limits_{j = 0}^{m + \floor*{\frac{k-m}{2}}} 1 - e^{\alpha_{\overline{m+1}} + j\delta} \right) 
   \left( \displaystyle\prod\limits_{j = 0}^{\ceil*{\frac{k-m}{2}} - 1} 1 - e^{\alpha_{\overline{m}} + j\delta} 
   \right)
   }  \\
   - \hspace{0.5cm} & e^{\alpha_i} \ddfrac{s_i \cdot (-1)^{m-1}f(w_{m-1})(1 - e^{w_{m-1} \cdot \alpha_1}) 
   \boldsymbol{B}  }
   {
   \left(\displaystyle\prod\limits_{j = 0}^{m + \floor*{\frac{k-m}{2}}} 1 - e^{s_i \cdot \alpha_{\overline{m}} + j\delta} \right) 
   \left( \displaystyle\prod\limits_{j = 0}^{\ceil*{\frac{k-m}{2}} - 1} 1 - e^{s_i \cdot \alpha_{\overline{m+1}} + j\delta} 
   \right)
   } \\
      \intertext{(by definition, $s_i$ acting on $f(w_{m-1})$ turns it into $f(w_m)$. The reflection $s_i$ also acts on the weights $e^{\lambda}$ and transforms them by reflecting across the hyperplane associated to the root $\alpha_i$)}
   = \hspace{1.0cm} &
    \ddfrac{(-1)^{m}f(w_m)(1 - e^{w_m \cdot \alpha_1}) \boldsymbol{A} }
   {
   \left(\displaystyle\prod\limits_{j = 0}^{m + \floor*{\frac{k-m}{2}}} 1 - e^{\alpha_{\overline{m+1}} + j\delta} \right) 
   \left( \displaystyle\prod\limits_{j = 0}^{\ceil*{\frac{k-m}{2}} - 1} 1 - e^{\alpha_{\overline{m}} + j\delta} 
   \right)
   }  \\
   + \hspace{0.5cm} &  \ddfrac{(-1)^{m}f(w_m)(1 - e^{w_m \cdot \alpha_1}) \boldsymbol{B} }
   {
   \left(\displaystyle\prod\limits_{j = 1}^{m + \floor*{\frac{k-m}{2}} + 1} 1 - e^{\alpha_{\overline{m+1}}  + j\delta} \right) 
   \left( \displaystyle\prod\limits_{j = -1}^{\ceil*{\frac{k-m}{2}} - 2} 1 - e^{\alpha_{\overline{m}} + j\delta} 
   \right)
   }  \\
   \intertext{(renumbering the indices to start at 1 and -1)}
   \intertext{Now notice that $\alpha_{\overline{m}} - \delta = -\alpha_{\overline{m+1}}$, so}
   = \hspace{1.0cm} &
    \ddfrac{(-1)^{m}f(w_m)(1 - e^{w_m \cdot \alpha_1}) \boldsymbol{A} }
   {
   \left(\displaystyle\prod\limits_{j = 0}^{m + \floor*{\frac{k-m}{2}}} 1 - e^{\alpha_{\overline{m+1}} + j\delta} \right) 
   \left( \displaystyle\prod\limits_{j = 0}^{\ceil*{\frac{k-m}{2}} - 1} 1 - e^{\alpha_{\overline{m}} + j\delta} 
   \right)
   }  \\
   + \hspace{0.5cm} &  \ddfrac{(-1)^{m}f(w_m)(1 - e^{w_m \cdot \alpha_1}) \boldsymbol{B} }
   {
   \left(\displaystyle\prod\limits_{j = 1}^{m + \floor*{\frac{k-m}{2}} + 1} 1 - e^{\alpha_{\overline{m+1}}  + j\delta} \right) 
   \left( \displaystyle\prod\limits_{j = 0}^{\ceil*{\frac{k-m}{2}} - 2} 1 - e^{\alpha_{\overline{m}} + j\delta} 
   \right)(1 - e^{-\alpha_{\overline{m+1}}})
   }  \\
   = \hspace{1.0cm} &
    \ddfrac{(-1)^{m}f(w_m)(1 - e^{w_m \cdot \alpha_1}) \boldsymbol{A} }
   {
   \left(\displaystyle\prod\limits_{j = 0}^{m + \floor*{\frac{k-m}{2}}} 1 - e^{\alpha_{\overline{m+1}} + j\delta} \right) 
   \left( \displaystyle\prod\limits_{j = 0}^{\ceil*{\frac{k-m}{2}} - 1} 1 - e^{\alpha_{\overline{m}} + j\delta} 
   \right)
   }  \\
   + \hspace{0.5cm} &  \ddfrac{(-1)^{m}f(w_m)(1 - e^{w_m \cdot \alpha_1}) \boldsymbol{B}(-e^{\alpha_{\overline{m}}}) }
   {
   \left(\displaystyle\prod\limits_{j = 0}^{m + \floor*{\frac{k-m}{2}} + 1} 1 - e^{\alpha_{\overline{m+1}}  + j\delta} \right) 
   \left( \displaystyle\prod\limits_{j = 0}^{\ceil*{\frac{k-m}{2}} - 2} 1 - e^{\alpha_{\overline{m}} + j\delta} 
   \right)
   }  \\
   \intertext{(in the previous step we polarized the negative weight $1 - e^{-\alpha_{\overline{m+1}}}$ into $(1 - e^{\alpha_{\overline{m+1}}})(-e^{\alpha_{\overline{m+1}}})^{-1}$ and absorbed $1 - e^{\alpha_{\overline{m+1}}}$ into the bottom right sum)}
   = \hspace{1.0cm} &
   \ddfrac{(-1)^{m}f(w_m)(1 - e^{w_m \cdot \alpha_1}) \left( 
   (1 - e^{\alpha_{\overline{m+1}} + (m + \floor*{\frac{k-m}{2}} + 1)\delta})
   \boldsymbol{A} + 
   (-e^{\alpha_{\overline{m+1}}})(1 - e^{\alpha_{\overline{m}} + (\ceil*{\frac{k-m}{2}} - 1)\delta})
   \boldsymbol{B} \right) }
   {
   \left(\displaystyle\prod\limits_{j = 0}^{m + \floor*{\frac{k-m}{2}} + 1} 1 - e^{\alpha_{\overline{m+1}}  + j\delta} \right) 
   \left( \displaystyle\prod\limits_{j = 0}^{\ceil*{\frac{k-m}{2}} - 1} 1 - e^{\alpha_{\overline{m}} + j\delta} 
   \right)
   }
\end{align*}
\vspace{1cm}

We calculate the numerator of the last expression above. We set $a = \floor*{\frac{k-m}{2}}$ and $b = k - \floor*{\frac{k-m}{2}}$ for more clarity in the following formulae. 
\begin{align*}
    &\left( (1 - e^{\alpha_{\overline{m+1}} + (m + \floor*{\frac{k-m}{2}} + 1)\delta})
   \displaystyle\sum\limits_{j=0}^{\left( k - \floor*{\frac{k-m}{2}} \right)\floor*{\frac{k-m}{2}}} P_{\floor*{\frac{k-m}{2}}, k - \floor*{\frac{k-m}{2}}}(j)e^{j\delta} \right)\\
  -&\left( e^{\alpha_{\overline{m+1}}}(1 - e^{\alpha_{\overline{m}} + (\ceil*{\frac{k-m}{2}} - 1)\delta})
   \displaystyle\sum\limits_{j=0}^{\floor*{\frac{k-(m-1)}{2}}} P_{\floor*{\frac{k-(m-1)}{2}}, k - \floor*{\frac{k-(m-1)}{2}}}(j) e^{j\delta} \right) \\
   =&\left( (1 - e^{\alpha_{\overline{m+1}} + (m + a + 1)\delta})
   \displaystyle\sum\limits_{j=0}^{ab} P_{a, b}(j)e^{j\delta} \right)
  -\left( e^{\alpha_{\overline{m+1}}}(1 - e^{\alpha_{\overline{m}} + a\delta})
   \displaystyle\sum\limits_{j=0}^{a + 1} P_{a + 1, b - 1}(j) e^{j\delta} \right) \\
   =& \displaystyle\sum\limits_{j=0}^{ab} P_{a, b}(j)e^{j\delta} 
   + e^{(a +1)\delta}\displaystyle\sum\limits_{j=0}^{\left( b - 1 \right)(a+1)} P_{a+1, b-1}(j) e^{j\delta} 
   -e^{\alpha_{\overline{m+1}}} \left\{ \displaystyle\sum\limits_{j=0}^{(a+1)(b-1)}  P_{a+1, b-1}(j)e^{j\delta}    + e^{(m + a + 1)\delta} \displaystyle\sum\limits_{j=0}^{ab}P_{a, b} \left(j \right) 
   e^{j\delta} \right\}
   \\
   \intertext{now we re-index the sums and partitions: In the second sum we reindex $j \leftarrow j + b+1$ and in the fourth sum we reindex $j \leftarrow j + (m + a + 1)$. Then we can change the lower limits in the (reindexed) sums to ~0 because the partitions of integers below $a+1$ and $(m + a + 1)$ will both be 0}
   =& \displaystyle\sum\limits_{j=0}^{ab} P_{a, b}(j)e^{j\delta} 
   + \displaystyle\sum\limits_{j=0}^{b(a+1)} P_{a+1, b-1} \left(j - a - 1 \right) e^{j\delta} \\
   &-e^{\alpha_{\overline{m+1}}} \left\{ \displaystyle\sum\limits_{j=0}^{(a+1)(b-1)}  P_{a+1, b-1}(j)e^{j\delta} +\displaystyle\sum\limits_{j=0}^{ab+m+a+1} P_{a, b} \left(j - (m + a + 1) \right) 
   e^{j\delta} \right\}
   \\
   \intertext{using Proposition \ref{elem} we can rewrite the partitions and upper limits of the sums as}
   =& \displaystyle\sum\limits_{j=0}^{ b (a + 1)} \left( P_{a, b}(j)  + P_{a+1, b-1} \left(j - a - 1 \right) \right)
   e^{j\delta} 
   -e^{\alpha_{\overline{m+1}}} \cdot \left\{ \displaystyle\sum\limits_{j=0}^{b(a+1)} \left( P_{a+1, b-1}(j)  + P_{a, b} (j - b ) \right)
   e^{j\delta} \right\}
\end{align*}

We see that by using Proposition \ref{part} and Corollary \ref{partcor}, the partitions in both sums become $P_{a + 1, b}(j)$. Finally after factoring out the $(1 - e^{\alpha_{\overline{m+1}}}) = (1 - e^{\alpha_{i}})$ term we arrive at the correct formula for $X_{k+1}$, thereby proving the induction step. \\

The other case is similar. \qed \\

Theorems \ref{v1} and \ref{v2} follow from taking the limit as $k \rightarrow \infty$ of the expression in Lemma \ref{fin}. The restricted partitions become partitions outright. We get the rational functions $R_w(\Omega SU(2)): T \times S^1 \rightarrow \mathbb{C}$ for each fixed point of $\Omega SU(2)$. \\

\textbf{Remark:} Strictly speaking, we should have taken the complex conjugate of the quantities in Theorems \ref{v1} and \ref{v2}. This is because the character of the positive energy representation was equal to the inverse limit of the \textbf{dual} representations given by the pullback bundles on the Bott-Samelson manifolds (see Theorem \ref{invlimit}). However, we note that this is mostly a matter of convention (using lowest weights vs. highest weights). The standard convention is with highest weights, as in Kumar \cite{kum}. However, Pressley and Segal use the lowest weight convention in their book \textit{Loop Groups} (see \cite{ps}). 

\section{Demazure modules}
A consequence of Lemma \ref{fin} is that it allows us to write an effective character formula for Demazure modules associated to the Schubert varieties. Previously, these characters were only given in terms of iterated Demazure operators. In this section we let $\mathfrak{g}$ be a Kac-Moody algebra and $\mathfrak{b}$ a Borel subalgebra.

\begin{definition}
Let $\lambda$ be a dominant weight, and let $V(\lambda)$ be the irreducible representation with highest-weight $\lambda$. For any $w \in \overline{W}$, form $E_w(\lambda) = \mathfrak{b} \cdot v_{w(\lambda)}$; it is a submodule of $V(\lambda)$ called the Demazure submodule of $V(\lambda)$ associated to $w$.
\end{definition}

The Demazure submodule of $V(\lambda)$ associated to $w$ is isomorphic to the restriction of the prequantum line bundle $L_{\lambda}$ to the Schubert variety $X_w$ (see Kumar \cite{kum}). Therefore Lemma \ref{fin} allows us to effectively compute the character of $E_w(\lambda)$ for $\lambda = k\overline{\omega_0}$ and for any $w \in \overline{W}$. \\

\textbf{Remark:} Previously, characters of Demazure modules were computed using the Demazure character formula. This formula allows one to write the character $ch \hspace{0.1cm}E_w$ as iterated applications of certain \textbf{Demazure operators} to the weight $e^{\lambda}$, i.e.  $ch \hspace{0.1cm} E_w = D_{s_{i_1}} \cdots D_{s_{i_n}} e^{\lambda}$ where $w = s_{i_1} \cdots s_{i_n}$ is a decomposition of the reduced word $w$. These Demazure operators act on the group algebra $A[T]$ (see Section \ref{nil-Hecke}). For a simple reflection $s_i$, the Demazure operator is defined by 
$$ D_{s_i}(e^{\lambda}) = \frac{e^{\lambda} - e^{s_i \lambda - \alpha_i} }{1 - e^{-\alpha_i}}.$$

\section{A Kostant Multiplicity function}
Each factor $(1 - e^{\alpha})^{-1}$ can be written as $\displaystyle\sum\limits_{n=0}^{\infty} e^{n\alpha}$, so we can compute the multiplicities of the character in the last section as a sum of partition functions, defined below
\begin{definition}
The partition function of a weight $\mu \in Lie(\mathcal{T})^{*}$ (see Section 6 for this notation) is defined as $N(\mu) = $\# of solutions of the equation $\displaystyle\sum\limits_{\alpha \in \triangle^+} n_{\alpha}\alpha = \mu$ where each $n_{\alpha} \in \mathbb{N} \cup \{0\}$. \\
Also define 
$N_{\beta}(\mu) = $\# of solutions of the equation $\displaystyle\sum\limits_{\alpha \in \triangle^+ \setminus \{\beta\}} n_{\alpha}\alpha = \mu$ where each $n_{\alpha} \in \mathbb{N} \cup \{0\}$. \\
\end{definition}

For a reduced word $w = s_{i_1} \cdots s_{i_n}$ we also define $f(w) = \displaystyle\sum\limits_{j=1}^n s_{i_1} \cdots s_{i_{j-1}} (\alpha_{i_j})$.  \\

Then by reading off the rational functions derived in the last section, we obtain the following Kostant multiplicity formula for $\Omega SU(2)$:
\begin{proposition}
Let $\lambda = k\overline{w}_0, k \in \mathbb{Z}_{>0}$, then the multiplicity of the weight $\alpha$ in the irreducible representation $L_{\lambda}$ is given by
\begin{align*} 
    m(\alpha, L_{\lambda}) &= \displaystyle\sum\limits_{m = 0}^{\infty} (-1)^m \left\{ 
    N(\alpha - (w_m \cdot \lambda + f(w_m))) - N(\alpha - (w_m \cdot (\lambda + \alpha_1) + f(w_m) ))
    \right\} \\
    &= \displaystyle\sum\limits_{m = 0}^{\infty} (-1)^m N_{w_m \cdot \alpha_1}(\alpha - (w_m \cdot \lambda + f(w_m)))
\end{align*}
\end{proposition}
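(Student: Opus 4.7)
The plan is to read off the weight multiplicities directly from the rational functions in Theorem \ref{v2}. Taking the inverse limit in Theorem \ref{invlimit} and summing the Atiyah-Bott contributions over all $(T \times S^1)$-fixed points of $\Omega SU(2)$ (indexed by $m \in \mathbb{Z}_{\ge 0}$), the character of the positive-energy representation $L_\lambda$ is
\[
\mathrm{ch}(L_\lambda) \;=\; \sum_{m=0}^{\infty} e^{w_m \cdot \lambda}\, R_m(\Omega SU(2)) \;=\; \sum_{m=0}^{\infty} \frac{(-1)^m \, e^{w_m \cdot \lambda + f(w_m)}\,(1 - e^{w_m \cdot \alpha_1})}{\prod_{\beta \in \triangle^+}(1 - e^{\beta})},
\]
where I read the numerator factor written as $f(w_m)$ in Theorem \ref{v2} as the exponential $S_{w_m} = e^{f(w_m)}$.

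Next I would expand the denominator as the generating series for the partition function,
\[
\frac{1}{\prod_{\beta \in \triangle^+}(1 - e^{\beta})} \;=\; \sum_{\mu} N(\mu)\, e^{\mu},
\]
and extract the coefficient of $e^{\alpha}$ term by term. Multiplying by $(1 - e^{w_m \cdot \alpha_1})$ distributes into a difference of shifted partition functions, and multiplying by the exponential $e^{w_m \cdot \lambda + f(w_m)}$ translates the argument, so the coefficient contributed by the $m$-th term is
\[
(-1)^m \bigl[\, N(\alpha - (w_m \cdot \lambda + f(w_m))) \;-\; N(\alpha - (w_m \cdot \lambda + f(w_m)) - w_m \cdot \alpha_1)\,\bigr].
\]
Combining $w_m \cdot \lambda + w_m \cdot \alpha_1 = w_m \cdot (\lambda + \alpha_1)$ gives the first form of the proposition. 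For the second form, I would invoke the elementary identity, valid for any positive root $\beta$,
\[
\sum_{\mu} N_\beta(\mu)\, e^\mu \;=\; \frac{1}{\prod_{\gamma \in \triangle^+ \setminus \{\beta\}}(1 - e^\gamma)} \;=\; \frac{1 - e^\beta}{\prod_{\gamma \in \triangle^+}(1 - e^\gamma)},
\]
which translates to $N(\mu) - N(\mu - \beta) = N_\beta(\mu)$. Applied with $\beta = w_m \cdot \alpha_1$, which is a positive real root (this is precisely the cancellation that takes Theorem \ref{v1} to Theorem \ref{v2}), this collapses the two partition-function terms into a single $N_{w_m \cdot \alpha_1}$.

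The step I expect to require the most care is justifying the termwise extraction of the coefficient of $e^\alpha$ from the infinite sum over $m$, since $\overline{W}/W_{\{1\}}$ is infinite and each $R_m$ is only a formal Laurent series in the torus characters. The argument I have in mind is that for any fixed weight $\alpha$, only finitely many $m$ contribute: as $m$ grows, the translate $w_m \cdot \lambda + f(w_m)$ accumulates positive combinations of simple roots and its $\delta$-component tends to infinity, so eventually $\alpha - (w_m \cdot \lambda + f(w_m))$ leaves the $\mathbb{N}$-cone spanned by $\triangle^+$ and both partition-function values become zero. Pinning down this effective bound on $m$, using the positive-energy hypothesis on $\lambda = k\overline{\omega}_0$, is what converts the formal manipulation above into an honest finite identity and yields the stated formula.
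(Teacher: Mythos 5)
Your proposal is correct and follows essentially the same route as the paper, which itself only says the formula is obtained ``by reading off the rational functions'' of Theorem \ref{v2} (expanding $\prod_{\beta\in\triangle^+}(1-e^{\beta})^{-1}$ as the partition generating series, distributing the factor $(1-e^{w_m\cdot\alpha_1})$, and using $N(\mu)-N(\mu-\beta)=N_{\beta}(\mu)$, with finiteness of the sum over $m$ noted in a closing remark). Your reading of $f(w_m)$ in the theorem as the exponential $S_{w_m}=e^{f(w_m)}$ and your justification that only finitely many $m$ contribute for fixed $\alpha$ both match the paper's intent, just spelled out more explicitly.
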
\label{kostant}

An immediate corollary is the following identity:

\begin{corollary}
\begin{equation}
 N(\alpha - (w_m \cdot \lambda + f(w_m))) - N(\alpha - (w_m \cdot (\lambda + \alpha_1) + f(w_m) ))
 =N_{w_m \cdot \alpha_1}(\alpha - (w_m \cdot \lambda + f(w_m))) 
\end{equation}
\end{corollary}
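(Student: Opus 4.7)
The plan is to deduce this identity from an elementary generating-function relation between $N$ and $N_\beta$, independent of the specific setting of $\Omega SU(2)$; the identity in fact holds for any choice of $\mu \in \mathrm{Lie}(\mathcal{T})^*$ and any $\beta \in \Delta^+$, and the corollary then follows by specialization.

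First I would rewrite the two partition functions as generating series. By definition of $N$ we have, as a formal identity in $\tilde Q[\mathcal{T}]$,
\begin{equation*}
\sum_{\mu} N(\mu)\, e^{\mu} \;=\; \prod_{\alpha \in \Delta^+} \frac{1}{1 - e^{\alpha}},
\end{equation*}
and analogously for $N_\beta$, by excluding the factor corresponding to $\beta$,
\begin{equation*}
\sum_{\mu} N_\beta(\mu)\, e^{\mu} \;=\; (1 - e^{\beta}) \prod_{\alpha \in \Delta^+} \frac{1}{1 - e^{\alpha}} \;=\; (1 - e^{\beta}) \sum_{\mu} N(\mu)\, e^{\mu}.
\end{equation*}
Comparing coefficients of $e^{\mu}$ on both sides gives the pointwise identity
\begin{equation*}
N_\beta(\mu) \;=\; N(\mu) \;-\; N(\mu - \beta),
\end{equation*}
which is the key lemma; this is where the whole content sits.

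Now I would specialize. Set $\beta := w_m \cdot \alpha_1$ and $\mu := \alpha - (w_m \cdot \lambda + f(w_m))$. Then
\begin{equation*}
\mu - \beta \;=\; \alpha - (w_m \cdot \lambda + f(w_m)) - w_m \cdot \alpha_1 \;=\; \alpha - \bigl(w_m \cdot (\lambda + \alpha_1) + f(w_m)\bigr),
\end{equation*}
using the linearity of the affine Weyl group action on weights. Substituting into the lemma yields exactly
\begin{equation*}
N_{w_m \cdot \alpha_1}\bigl(\alpha - (w_m \cdot \lambda + f(w_m))\bigr)
\;=\; N\bigl(\alpha - (w_m \cdot \lambda + f(w_m))\bigr) - N\bigl(\alpha - (w_m \cdot (\lambda + \alpha_1) + f(w_m))\bigr),
\end{equation*}
which is the desired identity.

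There is essentially no obstacle: the only mild point is to verify that the formal manipulations with infinite products in $\tilde Q[\mathcal{T}]$ make sense, i.e. that each coefficient $N(\mu)$ is finite so that the series $\sum N(\mu) e^\mu$ is a well-defined element of the completion used in Section~\ref{nil-Hecke}. This is standard for the positive roots of $\hat{\mathfrak{sl}}_2$ since for any fixed $\mu$ only finitely many tuples $(n_\alpha)_{\alpha \in \Delta^+}$ with $\sum n_\alpha \alpha = \mu$ can have all $n_\alpha \ge 0$; with that in hand, the coefficient-by-coefficient comparison is legitimate and the corollary follows.
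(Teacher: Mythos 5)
Your proof is correct and is essentially the argument the paper has in mind: the corollary is just the coefficient identity $N_\beta(\mu) = N(\mu) - N(\mu-\beta)$, obtained by either distributing the numerator factor $(1-e^{\beta})$ or cancelling it against the corresponding factor of $\prod_{\alpha\in\triangle^+}(1-e^{\alpha})^{-1}$, specialized to $\beta = w_m\cdot\alpha_1$ and $\mu = \alpha - (w_m\cdot\lambda + f(w_m))$; this is exactly what makes the two lines of the preceding proposition agree term by term. The only point worth making explicit is that $w_m\cdot\alpha_1$ is a \emph{positive} (real) root of $\hat{\mathfrak{sl}}_2$ for every $m$ --- otherwise $N_{w_m\cdot\alpha_1}$ would not be the partition function with one factor deleted --- but this is easily checked from the explicit form of the $w_m$.
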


\textbf{Remark:} The sum given above will always be finite. This is because only finitely many terms will be partitions of positive weights. \\

\end{document}